\theoremstyle{definition}
\newtheorem{theorem}{Theorem}[section]
\newtheorem{definition}[theorem]{Definition}
\newtheorem{lemma}[theorem]{Lemma}
\newtheorem{corollary}[theorem]{Corollay}
\newtheorem{proposition}[theorem]{Proposition}
\newtheorem{remark}[theorem]{Remark}
\begin{document}

\title{On the operator Acz\'{e}l inequality and its reverse}
\author{Shigeru Furuichi$^1$\footnote{E-mail:furuichi@chs.nihon-u.ac.jp},  Mohammad Reza Jabbarzadeh$^2$\footnote{E-mail:mjabbar@tabrizu.ac.ir},  and  Venus Kaleibary$^2$\footnote{E-mail:v.kaleibary@gmail.com}\\
$^1${\small Department of Information Science, College of Humanities and Sciences, Nihon University,}\\
{\small 3-25-40, Sakurajyousui, Setagaya-ku, Tokyo, 156-8550, Japan}\\
$^2${\small Faculty of Mathematical Sciences, University of Tabriz
5166615648, Tabriz, Iran}}
\date{}
\maketitle

{\bf Abstract.}
In this paper, we present some operator and eigenvalue inequalities involving  operator monotone, doubly concave and doubly convex functions. 
 These inequalities provide some variants of  operator Acz\'{e}l inequality and its reverse via generalized  Kantorovich constant.
\vspace{3mm}

{\bf Keywords : }
Acz\'{e}l inequality, Operator monotone function, Operator convex function, Geometrically convex function, Kantorovich constant.
\vspace{3mm}

{\bf 2010 Mathematics Subject Classification : } Primary 47A63, secondary 47A64, 15A60,   26D15
\vspace{3mm}

%%%%%%%%%%%%%%%%%%%%%%%%%%%%%%%%%%%%%%%%%%%%%%%%%%%%%%%%%%%%%%%%%%%%%%%%%%%%
%%%%%%%%%%%%%%%%%%%%%%%%%%%%%%%%%%%%%%%%%%%%%%%%%%%%%%%%%%%%%%%%%%%%%%%%%%%%
%%%%%%%%%%%%%%%%%%%%%%%%%%%%%%%%%%%%%%%%%%%%%%%%%%%%%%%%%%%%%%%%%%%%%%%%%%%%
%%%%%%%%%%%%%%%%%%%%%%%%%%%%%%%%%%%%%%%%%%%  Section1  %%%%%%%%%%%%%%%%%%%%%%%
%%%%%%%%%%%%%%%%%%%%%%%%%%%%%%%%%%%%%%%%%%%%%%%%%%%%%%%%%%%%%%%%%%%%%%%%%%%%
%%%%%%%%%%%%%%%%%%%%%%%%%%%%%%%%%%%%%%%%%%%%%%%%%%%%%%%%%%%%%%%%%%%%%%%%%%%%
%%%%%%%%%%%%%%%%%%%%%%%%%%%%%%%%%%%%%%%%%%%%%%%%%%%%%%%%%%%%%%%%%%%%%%%%%%%%

\section{Introduction}
Let $ B(\mathcal{H})$ denote the $C^*$-algebra of all bounded linear operators on a Hilbert space $( \mathcal{H}, \langle \cdot , \cdot \rangle)$. An operator $A \in B(\mathcal{H})$ is called \textit{positive} if $\langle Ax , x \rangle \geq 0$  for every $x \in \mathcal{H}$ and then we write $A \geq 0$. For self-adjoint operators $A,B \in  B(\mathcal{H})$, we say $A \leq B$ if $B - A \geq 0$. Also, we say $A$ is \textit{strictly positive} and we write $A > 0$, if $\langle Ax , x \rangle > 0$  for every $x \in \mathcal{H}$. Let $f$ be a continuous real function on $(0,\infty)$. Then $f$ is said to be \textit{operator monotone}
(more precisely, operator monotone increasing) if $A \geq B$ implies $f(A) \geq f(B)$ for strictly positive operators $A, B $, and \textit{operator monotone decreasing} if $-f$ is operator monotone or
$A \geq B$ implies $f(A) \leq f(B)$.
Also, $f$ is said to be \textit{operator convex} if $f((1-\alpha) A+ \alpha  B) \leq (1-\alpha) f(A)+ \alpha f(B)$
for all strictly positive operators $A,B$ and $ \alpha \in [0, 1]$, and \textit{operator concave} if $-f$ is operator convex.

In 1956, Acz\'{e}l \cite{Ac} proved that if $a_i, b_i (1 \leq i \leq n)$ are positive real numbers such that
$a_1^2 - \sum_{i=2}^n a_i^2 >0 $ and $b_1^2 - \sum_{i=2}^n b_i^2 >0 $, then
\begin{align*}
\left(a_1 b_1 - \sum_{i=2}^n a_i b_i \right)^2 \geq  \left(a_1^2 - \sum_{i=2}^n a_i^2 \right) \left(b_1^2 - \sum_{i=2}^n b_i^2 \right).
\end{align*}
Acz\'{e}l's inequality has important applications in the theory of functional equations in
non-Euclidean geometry \cite{Ac, TW2014} and considerable attention has been given to this inequality involving its
generalizations, variations and applications. See \cite{D1994, P1995} and references therein. 
Popoviciu \cite{P1995} first presented an exponential extension of Acz\'{e}l's
inequality as follows:

\begin{theorem}{\bf (\cite{P1995})}\label{thmx01}
Let $p>1, q>1, \frac{1}{p}+\frac{1}{q}=1$, 
$a_1^p - \sum_{i=2}^n a_i^p >0 $, and $b_1^q - \sum_{i=2}^n b_i^q >0 $. Then
\begin{align*}
 a_1 b_1 - \sum_{i=2}^n a_i b_i \geq  \left(a_1^p - \sum_{i=2}^n a_i^p \right)^\frac{1}{p} \left(b_1^q - \sum_{i=2}^n b_i^q \right)^\frac{1}{q}.
\end{align*}
\end{theorem}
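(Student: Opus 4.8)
The plan is to deduce Theorem~\ref{thmx01} directly from the classical H\"{o}lder inequality by absorbing the two constraint quantities into the H\"{o}lder sums as extra coordinates. Write $A := a_1^p - \sum_{i=2}^n a_i^p > 0$ and $B := b_1^q - \sum_{i=2}^n b_i^q > 0$, so that by construction $a_1^p = A + \sum_{i=2}^n a_i^p$ and $b_1^q = B + \sum_{i=2}^n b_i^q$. The goal inequality is then exactly $a_1 b_1 - \sum_{i=2}^n a_i b_i \geq A^{1/p} B^{1/q}$, and the key observation is that $a_1$ and $b_1$ are precisely the $\ell^p$- and $\ell^q$-norms of augmented vectors whose remaining coordinates are the $a_i$ and $b_i$.

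Concretely, I would introduce the nonnegative vectors $x = (A^{1/p}, a_2, \ldots, a_n)$ and $y = (B^{1/q}, b_2, \ldots, b_n)$, each with $n$ components. Applying the finite H\"{o}lder inequality $\sum_k x_k y_k \leq \left(\sum_k x_k^p\right)^{1/p}\left(\sum_k y_k^q\right)^{1/q}$ to these vectors produces on the left-hand side the quantity
\begin{align*}
A^{1/p} B^{1/q} + \sum_{i=2}^n a_i b_i,
\end{align*}
while the two factors on the right-hand side evaluate, by the definitions of $A$ and $B$, to $\left(A + \sum_{i=2}^n a_i^p\right)^{1/p} = a_1$ and $\left(B + \sum_{i=2}^n b_i^q\right)^{1/q} = b_1$. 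Thus H\"{o}lder reads $A^{1/p} B^{1/q} + \sum_{i=2}^n a_i b_i \leq a_1 b_1$, and subtracting $\sum_{i=2}^n a_i b_i$ from both sides yields the claim.

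The only things to verify carefully are that every quantity entering the H\"{o}lder inequality is genuinely nonnegative --- this is exactly where the hypotheses $A > 0$ and $B > 0$ are used, so that $A^{1/p}$ and $B^{1/q}$ are well defined --- and that the conjugacy $\tfrac1p + \tfrac1q = 1$ with $p, q > 1$ is precisely the condition under which H\"{o}lder applies. There is no genuine analytic obstacle; the single non-obvious move, and the step I would flag as the crux, is the reformulation that recognizes $A$ and $B$ as hidden $p$-th and $q$-th powers which can be reinserted as additional H\"{o}lder coordinates, thereby converting the difference inequality into a plain instance of H\"{o}lder. As a remark, equality holds exactly in the equality case of H\"{o}lder, namely when the vectors $x$ and $y$ are proportional.
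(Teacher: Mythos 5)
Your proof is correct. Note first that the paper itself offers no proof of this statement: it is quoted verbatim from Popoviciu (reference \cite{P1995}) as background for the operator-theoretic results, so there is no in-paper argument to compare against. Your argument is the standard and cleanest known derivation: setting $A = a_1^p - \sum_{i=2}^n a_i^p$ and $B = b_1^q - \sum_{i=2}^n b_i^q$ and augmenting the vectors with the extra coordinates $A^{1/p}$ and $B^{1/q}$, the $\ell^p$- and $\ell^q$-norms of the augmented vectors collapse exactly to $a_1$ and $b_1$, and H\"{o}lder's inequality rearranges to the claim. All the hypotheses are used where you say they are: $A, B > 0$ makes the fractional powers well defined (and together with positivity of the $a_i$ forces $a_1 > 0$, so the norm identities are legitimate), and conjugacy of $p$ and $q$ is exactly the H\"{o}lder condition. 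Your equality remark should be phrased slightly more carefully --- equality in H\"{o}lder holds when the coordinatewise powers $(x_k^p)$ and $(y_k^q)$ are proportional, not the vectors $x$ and $y$ themselves --- but this is a side comment, not part of the claimed inequality. It is also worth observing that your method is genuinely different in spirit from what the paper does with this inequality later: Corollary \ref{cor1.4} derives a \emph{counterpart} (a reversed-orientation variant under sandwich conditions, with the constant $c$) by specializing the operator result of Theorem \ref{theorem1.2} to a scalar function, whereas your argument is purely finite-dimensional and elementary, buying a short self-contained proof of the classical statement at the cost of not generalizing to the operator setting.
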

 Acz\'{e}l's  and Popoviciu's inequalities were
sharpened and a variant of Acz\'{e}l's inequality in inner product spaces was given by
Dragomir \cite{D1994}. Recently, 
Moslehian in \cite{Mos2011} proved an operator version of the classical Acz\'{e}l inequality involving $\alpha$-geometric mean 
\begin{equation}\label{def_alpha_geo_mean}
A \sharp_\alpha B = A^{1/2} ( A^{-1/2} B A^{-1/2})^\alpha A^{1/2}
\end{equation}
 for $A>0$, $B \geq 0$ and $\alpha \in [0,1]$, in the following form:  

\begin{theorem}  {\bf (\cite{Mos2011})} \label{thmx1.1}
Let $J$ be an interval of $(0,\infty)$, let $f : J  \to (0, \infty)$ be  operator decreasing and operator concave on $J$, $\frac{1}{p}+ \frac{1}{q}=1, p,q>1 $ and let $A,B \in \mathbb{B}(\mathcal{H})$ be positive invertible operators with spectra contained in $J$. Then
\begin{eqnarray} 
&&f (A^p \sharp_\frac{1}{q} B^q)  \geq f(A^p)\sharp_\frac{1}{q} f(B^q), \label{thmx1.1_ineq01}\\ 
&& \langle f(A^p \sharp_\frac{1}{q}B^q) x, x \rangle \geq  \langle f(A^p) x, x \rangle ^\frac{1}{p} \langle f(B^q) x, x \rangle^\frac{1}{q}. \label{thmx1.1_ineq02}
\end{eqnarray}
for any vector $x \in \mathcal{H}$.
\end{theorem}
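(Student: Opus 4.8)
The plan is to derive both inequalities from a single operator inequality, obtained by routing the comparison through the \emph{arithmetic} mean rather than the geometric mean. Write $X=A^p$, $Y=B^q$ and $\alpha=\frac1q$, so that $1-\alpha=\frac1p$; since $A,B$ are positive invertible with spectra in $J$ and $J$ is an interval, both $X\sharp_\alpha Y$ and $(1-\alpha)X+\alpha Y$ again have spectra in $J$, so $f$ may legitimately be applied to them. The cornerstone is the chain
\begin{align*}
f\big(X\sharp_\alpha Y\big)\ \ge\ f\big((1-\alpha)X+\alpha Y\big)\ \ge\ (1-\alpha)f(X)+\alpha f(Y).
\end{align*}
The first inequality uses the weighted operator arithmetic--geometric mean inequality $X\sharp_\alpha Y\le(1-\alpha)X+\alpha Y$ together with the hypothesis that $f$ is operator \emph{decreasing}; the second is exactly operator concavity of $f$ applied to the convex combination.

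From this chain I would read off the two assertions separately. For the operator inequality \eqref{thmx1.1_ineq01}, apply the weighted operator arithmetic--geometric mean inequality once more, now to the positive operators $f(X)$ and $f(Y)$, giving $(1-\alpha)f(X)+\alpha f(Y)\ge f(X)\sharp_\alpha f(Y)$; concatenating with the chain yields $f(X\sharp_\alpha Y)\ge f(X)\sharp_\alpha f(Y)$, which is \eqref{thmx1.1_ineq01}. For the vector inequality \eqref{thmx1.1_ineq02}, instead take the quadratic form $\langle\,\cdot\,x,x\rangle$ through the chain to obtain
\begin{align*}
\langle f(X\sharp_\alpha Y)x,x\rangle\ \ge\ (1-\alpha)\langle f(X)x,x\rangle+\alpha\langle f(Y)x,x\rangle,
\end{align*}
and then finish with the \emph{scalar} weighted arithmetic--geometric mean inequality $(1-\alpha)s+\alpha t\ge s^{1-\alpha}t^{\alpha}$ applied to $s=\langle f(X)x,x\rangle$ and $t=\langle f(Y)x,x\rangle$; recalling $1-\alpha=\frac1p$ and $\alpha=\frac1q$ gives precisely \eqref{thmx1.1_ineq02}, valid for every $x\in\mathcal H$ since no normalization of $x$ is needed.

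The step I expect to be the main obstacle is choosing the correct intermediate mean. The tempting route---prove \eqref{thmx1.1_ineq01} first and then extract \eqref{thmx1.1_ineq02} from it by a numerical inequality for the geometric mean---fails, because for a state one has $\langle (P\sharp_\alpha Q)x,x\rangle\le\langle Px,x\rangle^{1-\alpha}\langle Qx,x\rangle^{\alpha}$ (the Kubo--Ando transformer inequality), which points the wrong way. The resolution, and the key idea, is that operator concavity of $f$ naturally produces the \emph{arithmetic} mean $(1-\alpha)f(X)+\alpha f(Y)$, whose quadratic form is genuinely linear in the two scalars $\langle f(X)x,x\rangle$ and $\langle f(Y)x,x\rangle$; only at that final scalar stage does one invoke arithmetic--geometric mean, where it points the right way. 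The one remaining routine point is to confirm that $f$ is defined on the relevant means, i.e. that their spectra lie in $J$: if $mI\le X,Y\le MI$ with $[m,M]\subseteq J$, then monotonicity of $\sharp_\alpha$ and of the convex combination forces $mI\le X\sharp_\alpha Y\le MI$ and $mI\le(1-\alpha)X+\alpha Y\le MI$, so both spectra lie in $[m,M]\subseteq J$.
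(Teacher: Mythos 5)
Your proof is correct and is essentially the argument of the cited source: the chain $f(A^p\sharp_{1/q}B^q)\ge f(A^p\nabla_{1/q}B^q)\ge \frac1p f(A^p)+\frac1q f(B^q)$ (operator Young inequality plus operator monotone decreasing, then operator concavity), finished either by a second operator Young step for \eqref{thmx1.1_ineq01} or by the scalar weighted AM--GM on the quadratic form for \eqref{thmx1.1_ineq02}, is exactly how this theorem is proved in \cite{Mos2011}, the paper itself quoting it without proof. It is also the same skeleton the paper reuses (with a Kantorovich correction replacing the Young step) in its proof of Theorem \ref{theorem1.2}, so your observation that one must pass through the arithmetic mean rather than try to deduce \eqref{thmx1.1_ineq02} from \eqref{thmx1.1_ineq01} matches the intended route.
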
\label{thmx 1.1}
After that, Kaleibary and Furuichi in \cite{KF2018} provided a reverse of operator Acz\'{e}l inequality using Kantorovich constant as follows.
\begin{theorem}{\bf (\cite[Theorem 1]{KF2018})}\label{thmx1.2}
 Let $ g $ be a non-negative operator monotone decreasing function on $(0, \infty)$, $\frac{1}{p}+ \frac{1}{q}=1, p,q>1$,  and $0 < s A^p \leq B^q  \leq t A^p$ for some scalars $0 < s \leq t$. Then, for all $x \in \mathcal{H}$
\begin{align} 
&g (A^p \sharp_\frac{1}{q} B^q)  \leq \max \lbrace  K(s)^R, K(t)^R \rbrace g(A^p)\sharp_\frac{1}{q} g(B^q), \label{thmx 1.2_ineq01}\\ 
& \langle g(A^p \sharp_\frac{1}{q}B^q) x , x \rangle \leq \max \lbrace  K(s)^R, K(t)^R \rbrace \langle g(A^p) x , x \rangle ^\frac{1}{p} \langle g(B^q) x , x \rangle^\frac{1}{q}, \label{thmx 1.2_ineq02}
\end{align}
where $R= \max \left\{ \frac{1}{p}, \frac{1}{q} \right\} $, and  $K(h) = \dfrac{(h+1)^2}{4h}, h>0$ is the Kantorovich constant.
\end{theorem}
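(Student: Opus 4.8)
The plan is to first deduce the scalar inequality (\ref{thmx 1.2_ineq02}) from the operator inequality (\ref{thmx 1.2_ineq01}), and then to concentrate all the work on (\ref{thmx 1.2_ineq01}). Throughout I write $P=A^p$, $Q=B^q$ and $\alpha=\frac1q$, so that $1-\alpha=\frac1p$ and $R=\max\{\alpha,1-\alpha\}$; the hypothesis $0<sP\le Q\le tP$ says exactly that $T:=P^{-1/2}QP^{-1/2}$ satisfies $sI\le T\le tI$. For the reduction I would apply the functional $\langle\,\cdot\,x,x\rangle$ to (\ref{thmx 1.2_ineq01}) and then invoke the standard inner-product estimate for the weighted geometric mean, $\langle(C\sharp_\alpha D)x,x\rangle\le\langle Cx,x\rangle^{1-\alpha}\langle Dx,x\rangle^{\alpha}$, with $C=g(A^p)$ and $D=g(B^q)$; this turns the right-hand side of (\ref{thmx 1.2_ineq01}) into $\langle g(A^p)x,x\rangle^{1/p}\langle g(B^q)x,x\rangle^{1/q}$ and yields (\ref{thmx 1.2_ineq02}).

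The nonlinearity of $g$ is the reason (\ref{thmx 1.2_ineq01}) cannot be attacked by a single congruence, so my main device is the Stieltjes-type integral representation of a non-negative operator monotone decreasing function, $g(x)=c+\int_0^\infty(x+t)^{-1}\,d\nu(t)$ with $c\ge0$ and $\nu\ge0$, which linearizes $g$ into the operator-well-behaved building blocks $Y\mapsto(Y+t)^{-1}$; applied spectrally this gives $g(Y)=cI+\int_0^\infty(Y+t)^{-1}\,d\nu(t)$ for every $Y>0$. The key observation is that, by the inversion identity $(X\sharp_\alpha Z)^{-1}=X^{-1}\sharp_\alpha Z^{-1}$, the desired building-block inequality $(P\sharp_\alpha Q+t)^{-1}\le C\,(P+t)^{-1}\sharp_\alpha(Q+t)^{-1}$, with $C=\max\{K(s)^R,K(t)^R\}$, is equivalent after taking inverses to the reverse superadditivity statement $(P+t)\sharp_\alpha(Q+t)\le C\,(P\sharp_\alpha Q+t)$. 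Granting this, I would integrate the building-block inequality against $d\nu(t)$, restore the constant term, and pull the integrals inside the mean using the joint superadditivity of $\sharp_\alpha$ (concavity plus positive homogeneity) together with $C\ge1$; the two applications of superadditivity collapse the estimate to $g(P\sharp_\alpha Q)\le C\,g(P)\sharp_\alpha g(Q)$, which is (\ref{thmx 1.2_ineq01}).

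It remains to prove the reverse superadditivity $(P+t)\sharp_\alpha(Q+t)\le C\,(P\sharp_\alpha Q+t)$, which is the heart of the argument. Here I would combine the operator arithmetic--geometric mean inequality $(P+t)\sharp_\alpha(Q+t)\le(1-\alpha)(P+t)+\alpha(Q+t)=(1-\alpha)P+\alpha Q+tI$ with an operator reverse Young inequality $(1-\alpha)P+\alpha Q\le C\,(P\sharp_\alpha Q)$; since $C\ge1$, adding $tI$ then gives the claim. The reverse Young inequality is the genuinely delicate point, and I would establish it by congruence with $P^{-1/2}$: this turns it into $(1-\alpha)I+\alpha T\le C\,T^{\alpha}$, an inequality between two functions of the single operator $T$ with $sI\le T\le tI$, hence equivalent by the spectral theorem to the scalar estimate $\frac{(1-\alpha)+\alpha\tau}{\tau^{\alpha}}\le C$ for $\tau\in[s,t]$.

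Finally, the scalar estimate is governed by the Kantorovich constant through the reverse-Young bound $\frac{(1-\alpha)+\alpha\tau}{\tau^{\alpha}}\le K(\tau)^{R}$ with $R=\max\{\alpha,1-\alpha\}$; since $K(h)=\frac{(h+1)^2}{4h}$ decreases on $(0,1)$ and increases on $(1,\infty)$, its maximum over $[s,t]$ is attained at an endpoint, so $\sup_{\tau\in[s,t]}K(\tau)^R=\max\{K(s)^R,K(t)^R\}=C$, exactly matching the stated constant. I expect the main obstacle to be this reverse Young step: both verifying the sharp exponent $R$ in the scalar Kantorovich bound and checking that the endpoint optimization reproduces $\max\{K(s)^R,K(t)^R\}$, whereas the integral-representation reduction and the two superadditivity steps should be routine once the building-block inequality is in hand.
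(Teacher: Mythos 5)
Your proposal is correct, and it takes a genuinely different route from the source proof. The paper quotes Theorem \ref{thmx1.2} from \cite{KF2018} and supplies its two actual ingredients as Lemma \ref{lemma1.5} (the operator reverse Young inequality with constant $c=\max\{K(s)^R,K(t)^R\}$) and Lemma \ref{lemma1.1}, whose outer inequality $\frac{1}{c}\,g(A\sharp_\alpha B)\le g(A)\sharp_\alpha g(B)$ already \emph{is} \eqref{thmx 1.2_ineq01} after the substitutions $A:=A^p$, $B:=B^q$, $\alpha:=1/q$; the chain there is $\frac{1}{c}\,g(A\sharp_\alpha B)\le g\bigl(c(A\sharp_\alpha B)\bigr)\le g(A\nabla_\alpha B)\le g(A)\sharp_\alpha g(B)$, where the last step is the Ando--Hiai operator log-convexity property of non-negative operator monotone decreasing functions, invoked as a black box. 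You instead linearize $g$ via its representation $g(x)=c+\int_0^\infty (x+t)^{-1}\,d\nu(t)$, prove the resolvent inequality $(P\sharp_\alpha Q+t)^{-1}\le C\,(P+t)^{-1}\sharp_\alpha(Q+t)^{-1}$ by inversion, the AM--GM inequality $(P+t)\sharp_\alpha(Q+t)\le P\nabla_\alpha Q+t$, and the same reverse Young bound, and then reassemble with two uses of superadditivity of $\sharp_\alpha$ together with $C\ge 1$ --- in effect inlining a proof of the log-convexity inequality rather than citing it; I checked the two superadditivity applications (merging the integrals and absorbing the constant term $cI=cI\sharp_\alpha cI$) and the inversion equivalence, and they are sound, including the harmless endpoint $t=0$ where $(P\sharp_\alpha Q)^{-1}=P^{-1}\sharp_\alpha Q^{-1}$. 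Both routes bottom out in the same scalar Kantorovich estimate $(1-\alpha)+\alpha\tau\le K(\tau)^R\tau^\alpha$ on $[s,t]$ plus the unimodality of $K$ (decreasing on $(0,1)$, increasing on $[1,\infty)$, minimum at $1$), which is exactly the content of Lemma \ref{lemma1.5} that the paper also takes as known, so your leaving it cited rather than proved matches the paper's own level of rigor; your reduction of \eqref{thmx 1.2_ineq02} from \eqref{thmx 1.2_ineq01} via $\langle (C\sharp_\alpha D)x,x\rangle\le\langle Cx,x\rangle^{1-\alpha}\langle Dx,x\rangle^{\alpha}$ is also fine, since that left-hand estimate (the first part of Lemma \ref{lemma 1.3}) holds for all positive operators with no sandwich condition on $g(A^p),g(B^q)$. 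What your approach buys is self-containedness and a transparent mechanism for why the constant survives composition with $g$ (positive homogeneity plus joint concavity of the geometric mean); what the paper's approach buys is brevity, since monotone decreasingness plus operator log-convexity dispatches the nonlinear step in one line.
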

In this paper, we first investigate some operator and eigenvalue inequalities involving  operator monotone, doubly concave and doubly convex functions. Then  we provide another type of operator Acz\'{e}l inequalities along with their reverse using the obtained results.
Since  for a nonnegative continuous function $f$ defined on $(0,\infty)$ the operator concavity is equivalent to the operator monotonicity, the assumptions on $f$  in Theorem \ref{thmx1.1}  seem to be slightly strong, in the special case of $J=(0,\infty)$. Hence, we aim to prove a variant of Theorem \ref{thmx1.1} for the reduced condition such as a non-negative operator monotone function $f$. As an application,  we present a counterpart of the classical  Acz\'{e}l inequality stated in Theorem \ref{thmx01}. These results are organized in Sections 2. Section 3 is devoted to study of Acz\'{e}l type inequality involving doubly concave functions.  In Sections 4, we show several eigenvalue inequalities involving  $\alpha$-geometric mean and doubly convex  functions. The obtained eigenvalue inequalities allow us  to study the reverse of operator Acz\'{e}l inequality  via the generalized Kantorovich
constant $K(w,\alpha)$.
The assumptions of doubly convexity (concavity) will be discussed in more details in later sections. 

\section{A variant of operator Acz\'{e}l inequaltiy}
In this section, we  present a variant of operator Acz\'{e}l inequaltiy by using several reverse Young's inequalities.
Let $A$ and $B$, be strictly positive operators. For each $\alpha \in [0, 1]$ the \textit{$\alpha$-arithmetic} mean is defined as
$A\bigtriangledown_\alpha B := (1-\alpha)A + \alpha B$ and the \textit{$\alpha$-geometric} mean is defined in \eqref{def_alpha_geo_mean}.
Clearly if $AB = BA$, then $A\sharp_\alpha B = A^{1-\alpha}B^\alpha$.
Basic properties of the arithmetic and geometric means can be found in \cite{FMPS2005}.
It is well-known as the Young inequality
\begin{align*}
A \sharp_\alpha B \leq A\nabla_{\alpha}B.
\end{align*}
The research on the Young inequality is interesting and there are several multiplicative and additive reverses of this inequality. We give here some reverse inequalities for the  operators with the sandwich consition $ 0 < s A \leq  B \leq t A$. 
\begin{lemma}{\bf (\cite[Lemma 2]{KF2018})} \label{lemma1.5}
 Let $ 0 < s A \leq  B \leq t A$ for some scalars $0 < s \leq t $ and $\alpha \in [0, 1]$. Then 
\begin{align}  \label{lemma1.5_ineq01}
A\nabla_{\alpha} B \leq  \max \lbrace  K(s)^R, K(t)^R \rbrace ( A \sharp_\alpha B ),
\end{align}
where $K(\cdot)$ is the Kantorovich constant  defined in Theorem \ref{thmx1.2} and  $R= \max \lbrace \alpha, 1-\alpha \rbrace $.
\end{lemma}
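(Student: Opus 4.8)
The plan is to \emph{factor out} $A$ and reduce the operator inequality to a one--variable scalar inequality, to which the spectral theorem then applies. Since the operators are positive invertible, set $X := A^{-1/2}BA^{-1/2}$. Conjugating the hypothesis $0 < sA \le B \le tA$ by $A^{-1/2}>0$ shows it is equivalent to $sI \le X \le tI$, so the spectrum of $X$ lies in $[s,t]$. A direct computation gives $A\nabla_\alpha B = A^{1/2}\bigl((1-\alpha)I+\alpha X\bigr)A^{1/2}$ and $A\sharp_\alpha B = A^{1/2}X^\alpha A^{1/2}$. Because $Y\mapsto A^{1/2}YA^{1/2}$ is order preserving, it suffices to prove $(1-\alpha)I+\alpha X \le \max\{K(s)^R,K(t)^R\}\,X^\alpha$; and since this is a single scalar function applied to $X$, by functional calculus it follows from the scalar inequality $(1-\alpha)+\alpha x \le \max\{K(s)^R,K(t)^R\}\,x^\alpha$ for every $x\in[s,t]$.

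The crux is the pointwise scalar reverse Young inequality $(1-\alpha)+\alpha x \le K(x)^R x^\alpha$ for all $x>0$, with $R=\max\{\alpha,1-\alpha\}$. Taking logarithms, set $\varphi(x):=\log\bigl((1-\alpha)+\alpha x\bigr)-\alpha\log x-R\log K(x)$; since $K(1)=1$ we have $\varphi(1)=0$, so it is enough to show $\varphi$ is maximized at $x=1$. Differentiating yields $\varphi'(x)=\frac{x-1}{x}\,\psi(x)$, where $\psi(x)=\frac{\alpha(1-\alpha)}{(1-\alpha)+\alpha x}-\frac{R}{x+1}$. As $\frac{x-1}{x}$ is negative on $(0,1)$ and positive on $(1,\infty)$, the required sign pattern $\varphi'\ge 0$ on $(0,1)$ and $\varphi'\le 0$ on $(1,\infty)$ is equivalent to the single condition $\psi(x)\le 0$ for all $x>0$. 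Clearing denominators, $\psi(x)\le 0$ reduces to $\alpha\bigl[(1-\alpha)-R\bigr]x+(1-\alpha)\bigl[\alpha-R\bigr]\le 0$, and here the definition $R=\max\{\alpha,1-\alpha\}$ does the work: if $\alpha\le\frac12$ then $R=1-\alpha$, the coefficient of $x$ vanishes and the constant term $(1-\alpha)(2\alpha-1)$ is nonpositive; if $\alpha\ge\frac12$ then $R=\alpha$, the constant term vanishes and the coefficient $\alpha(1-2\alpha)$ of $x$ is nonpositive. Either way $\psi\le 0$, so $\varphi(x)\le\varphi(1)=0$, which is the scalar inequality.

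It remains to pass from the pointwise constant $K(x)^R$ to the uniform constant $\max\{K(s)^R,K(t)^R\}$ by an envelope argument. Since $K'(x)=\frac{x^2-1}{4x^2}$, the Kantorovich function $K$ is strictly decreasing on $(0,1)$ and strictly increasing on $(1,\infty)$ with minimum $K(1)=1$; hence a U--shaped function on $[s,t]$ attains its maximum at an endpoint, giving $\max_{x\in[s,t]}K(x)=\max\{K(s),K(t)\}$. As $R\ge 0$ and $y\mapsto y^R$ is nondecreasing, this yields $K(x)^R\le\max\{K(s)^R,K(t)^R\}$ for all $x\in[s,t]$. Combining with the pointwise bound gives $(1-\alpha)+\alpha x\le\max\{K(s)^R,K(t)^R\}\,x^\alpha$ on $[s,t]$; feeding this through the functional calculus for $X$ and conjugating by $A^{1/2}$ recovers $A\nabla_\alpha B\le\max\{K(s)^R,K(t)^R\}(A\sharp_\alpha B)$.

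I expect the main obstacle to be the scalar inequality $(1-\alpha)+\alpha x\le K(x)^R x^\alpha$, and specifically the realization that the role of $R=\max\{\alpha,1-\alpha\}$ is precisely to annihilate one of the two coefficients in the reduced condition $\psi\le 0$, so that the inequality holds on the \emph{entire} half--line rather than merely near $x=1$. Once this case analysis is in place, everything else --- the spectral reduction, order preservation under conjugation by $A^{1/2}$, and the monotonicity/envelope behaviour of $K$ --- is routine.
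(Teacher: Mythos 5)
Your proof is correct, and I checked the computations: $\varphi'(x)=\frac{x-1}{x}\psi(x)$ with $\psi(x)=\frac{\alpha(1-\alpha)}{(1-\alpha)+\alpha x}-\frac{R}{x+1}$ is right; clearing denominators does give $\alpha\left[(1-\alpha)-R\right]x+(1-\alpha)\left[\alpha-R\right]\le 0$, and the choice $R=\max\{\alpha,1-\alpha\}$ kills one coefficient in each case exactly as you say (for $\alpha=\tfrac12$ one even gets $\psi\equiv 0$, consistent with equality there); and $K'(x)=\frac{x^2-1}{4x^2}$ gives the unimodality needed for the endpoint maximum on $[s,t]$. One point of comparison: the paper itself gives no proof of this lemma --- it is imported verbatim as Lemma 2 of \cite{KF2018}. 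In that reference the argument follows the same skeleton as yours (reduction to $X=A^{-1/2}BA^{-1/2}$ with spectrum in $[s,t]$, the pointwise scalar bound $(1-\alpha)+\alpha x\le K(x)^R x^\alpha$, the endpoint estimate via the monotonicity pattern of $K$, then functional calculus and conjugation by $A^{1/2}$), but the scalar reverse Young inequality with the Kantorovich constant is there quoted from the literature (it is due to Liao, Wu and Zhao) rather than proved. So your write-up is essentially the standard route, with the added value of being self-contained: your logarithmic-derivative case analysis supplies a complete elementary proof of the scalar inequality that the published proof takes as a black box. Two cosmetic quibbles: ``equivalent to $\psi\le 0$'' should strictly read ``equivalent on $(0,1)\cup(1,\infty)$'' (at $x=1$ the factor $\frac{x-1}{x}$ vanishes), and you might note explicitly that $\alpha\in\{0,1\}$ is harmless since then the inequality reduces to $1\le K(x)$ or $x\le K(x)x$, both immediate from $K\ge 1$; neither affects correctness.
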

The function $K(\cdot)$ is decreasing on $(0, 1)$ and increasing on $[1, \infty)$, $K(t)=K(\frac{1}{t})$, and $K(t)\geq 1$ for every $t>0$ \cite{FMPS2005}.

\begin{lemma}{\bf (\cite[Proposition 1]{KF2018})} \label{lemma1.1}
 Let $ g $ be a non-negative operator monotone decreasing function on $(0, \infty)$ and $0 < s A \leq  B  \leq t A$ for some scalars $0 < s \leq t$. Then, for all $\alpha \in [0 , 1]$
\begin{align}  \label{lemma1.1_ineq01}
\frac{1}{c} g( A \sharp_\alpha B)  \leq g( c( A \sharp_\alpha B) ) \leq  g(A)\sharp_\alpha g(B),
\end{align}
where $c= \max \lbrace  K^R(s), K^R(t) \rbrace$  with the Kantorovich constant $K(\cdot)$ and $R= \max \lbrace \alpha, 1-\alpha \rbrace $.
\end{lemma}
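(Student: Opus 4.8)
The plan is to treat the two displayed inequalities separately, noting first that $c=\max\{K^R(s),K^R(t)\}\geq 1$, since $K(\cdot)\geq 1$ and $R=\max\{\alpha,1-\alpha\}>0$. For the left inequality I would set $X:=A\sharp_\alpha B>0$ and observe that $\frac{1}{c}g(A\sharp_\alpha B)\leq g\big(c(A\sharp_\alpha B)\big)$ is exactly $g(cX)\geq \frac{1}{c}g(X)$. As this involves only the single operator $X$, the functional calculus reduces it to the scalar inequality
\begin{align*}
g(c\xi)\geq \tfrac{1}{c}\,g(\xi)\qquad (\xi>0,\ c\geq 1),
\end{align*}
which is equivalent to the monotonicity (non-decrease) of $r(t):=t\,g(t)$ on $(0,\infty)$ and holds for every non-negative operator monotone decreasing $g$; applying the functional calculus to $X=A\sharp_\alpha B$ then gives the left inequality.

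For the right inequality I would combine the reverse Young inequality of Lemma~\ref{lemma1.5}, namely $A\nabla_\alpha B\leq c(A\sharp_\alpha B)$, with the fact that $g$ is operator monotone \emph{decreasing}: applying $g$ reverses the order, so that
\begin{align*}
g\big(c(A\sharp_\alpha B)\big)\leq g(A\nabla_\alpha B).
\end{align*}
It then remains to prove the key inequality $g(A\nabla_\alpha B)\leq g(A)\sharp_\alpha g(B)$. It is worth stressing that the operator convexity of $g$ (which does hold here) is not sufficient: it only yields $g(A\nabla_\alpha B)\leq g(A)\nabla_\alpha g(B)$, and the arithmetic mean lies \emph{above} the geometric mean $g(A)\sharp_\alpha g(B)$, so this is the wrong direction.

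To obtain the key inequality I would use the integral representation of a non-negative operator monotone decreasing function,
\begin{align*}
g(t)=a+\int_{[0,\infty)}\frac{1}{t+\sigma}\,d\mu(\sigma),\qquad a\geq 0,\ \mu\geq 0,
\end{align*}
the point mass at $\sigma=0$ accounting for a possible $1/t$ term. For each building block $g_\sigma(t)=1/(t+\sigma)$, writing $A'=A+\sigma>0$ and $B'=B+\sigma>0$ and using the inversion property $(A'\sharp_\alpha B')^{-1}=(A')^{-1}\sharp_\alpha (B')^{-1}$, the inequality $g_\sigma(A\nabla_\alpha B)\leq g_\sigma(A)\sharp_\alpha g_\sigma(B)$ becomes, upon inverting, the plain Young inequality $A'\sharp_\alpha B'\leq A'\nabla_\alpha B'$ (the constant term $a$ contributing an equality). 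Finally I would superpose the building blocks: the left-hand side depends additively on $g$, whereas the joint concavity, hence superadditivity, of the operator mean gives $g_1(A)\sharp_\alpha g_1(B)+g_2(A)\sharp_\alpha g_2(B)\leq\big(g_1(A)+g_2(A)\big)\sharp_\alpha\big(g_1(B)+g_2(B)\big)$, and likewise for integrals. Passing the building-block inequalities through this superadditivity yields $g(A\nabla_\alpha B)\leq g(A)\sharp_\alpha g(B)$, and chaining with the previous display completes the proof. The main obstacle is exactly this superposition step: since $g\mapsto g(A)\sharp_\alpha g(B)$ is nonlinear, the scalar inequalities cannot be integrated termwise, and it is the joint concavity of the geometric mean that preserves the direction of the inequality.
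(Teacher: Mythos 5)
Your proof is correct, and it is worth noting what you are being compared against: the paper does not prove Lemma \ref{lemma1.1} at all, but quotes it from \cite{KF2018} (Proposition 1 there). The proof in that source has the same two-step skeleton as yours: the reverse Young inequality of Lemma \ref{lemma1.5}, $A\nabla_\alpha B \leq c\,(A\sharp_\alpha B)$, followed by the key fact that every non-negative operator monotone decreasing $g$ on $(0,\infty)$ satisfies $g(A\nabla_\alpha B)\leq g(A)\sharp_\alpha g(B)$ --- which you correctly identify as the crux, and correctly observe cannot be replaced by mere operator convexity. The genuine difference is that where \cite{KF2018} simply invokes this fact as the Ando--Hiai characterization of operator log-convex functions, you reprove it from scratch: the integral representation $g(t)=a+\int_{[0,\infty)}(t+\sigma)^{-1}\,d\mu(\sigma)$, the affinity of $\nabla_\alpha$ (so that $A\nabla_\alpha B+\sigma=(A+\sigma)\nabla_\alpha(B+\sigma)$), the inversion identity $(A'\sharp_\alpha B')^{-1}=(A')^{-1}\sharp_\alpha (B')^{-1}$ reducing each building block to plain Young, and the superadditivity of $(X,Y)\mapsto X\sharp_\alpha Y$ (from joint concavity plus positive homogeneity) to superpose the blocks. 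This is essentially how the Ando--Hiai theorem itself is proved, so your write-up buys a self-contained argument at the cost of length; the citation route buys brevity. Two small points you should make explicit: (i) for the left-hand inequality you assert, but do not justify, that $t\mapsto t\,g(t)$ is non-decreasing; this follows from the very representation you introduce later (each block $t/(t+\sigma)$, and $at$, is non-decreasing), or alternatively from the subhomogeneity $(1/g)(ct)\leq c\,(1/g)(t)$ of the operator monotone increasing function $1/g$ --- note it genuinely needs operator monotonicity, since an ordinary convex decreasing $g\geq 0$ such as $g(t)=\max\{1-t,0\}$ fails it; and (ii) passing superadditivity of $\sharp_\alpha$ from finite sums to the integral deserves a sentence (approximation by Riemann sums and norm continuity), which is routine here because the spectra of $A$, $B$ and $A\nabla_\alpha B$ lie in a compact subinterval of $(0,\infty)$.
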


\begin{lemma}\label{lemma1.2}
Let $f$ be a non-negative operator monotone function on $(0, \infty)$ and $0 < s A \leq  B  \leq t A$ for some constants $0 < s \leq t$. Then, for all $\alpha \in [0 , 1]$ we have
$$
c f( A\sharp_\alpha B)\geq f(c (A\sharp_\alpha B)) \geq  f(A)\sharp_\alpha f(B),
$$
where $c= \max \lbrace  K^R(s), K^R(t) \rbrace$ with the Kantorovich constant $K(\cdot)$ and $R= \max \lbrace \alpha, 1-\alpha \rbrace $. 
\end{lemma}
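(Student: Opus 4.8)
The plan is to read the two-sided chain from right to left and to prove each inequality separately, using the fact recalled in the introduction that a non-negative operator monotone function on $(0,\infty)$ is automatically operator concave, together with Lemma \ref{lemma1.5} and the Young inequality. For the right-hand inequality $f(c(A\sharp_\alpha B)) \geq f(A)\sharp_\alpha f(B)$, I would begin from Lemma \ref{lemma1.5}, which furnishes the sandwich reversal $A\nabla_\alpha B \leq c(A\sharp_\alpha B)$ with precisely the constant $c = \max\{K^R(s), K^R(t)\}$ occurring here. Applying operator monotonicity of $f$ to this operator inequality gives $f(A\nabla_\alpha B) \leq f(c(A\sharp_\alpha B))$, while operator concavity yields $f(A\nabla_\alpha B) = f((1-\alpha)A + \alpha B) \geq (1-\alpha)f(A) + \alpha f(B) = f(A)\nabla_\alpha f(B)$. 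Since the Young inequality gives $f(A)\sharp_\alpha f(B) \leq f(A)\nabla_\alpha f(B)$, the chain closes: $f(c(A\sharp_\alpha B)) \geq f(A)\nabla_\alpha f(B) \geq f(A)\sharp_\alpha f(B)$.

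For the left-hand inequality $c f(A\sharp_\alpha B) \geq f(c(A\sharp_\alpha B))$, the crucial observation is that $c \geq 1$, since the Kantorovich constant satisfies $K(\cdot) \geq 1$, as noted after Lemma \ref{lemma1.5}. I would isolate the general fact that, for a non-negative operator concave $f$ and any scalar $c \geq 1$, one has $f(cX) \leq c f(X)$ for every strictly positive $X$; specializing to $X = A\sharp_\alpha B$ then produces exactly the desired inequality. To prove $f(cX) \leq c f(X)$, I would set $\lambda = 1/c \in (0,1]$ and write $X = \lambda(cX) + (1-\lambda)\cdot 0$, so that operator concavity gives $f(X) \geq \lambda f(cX) + (1-\lambda)f(0^+) \geq \tfrac{1}{c} f(cX)$, whence $c f(X) \geq f(cX)$.

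The main obstacle is precisely this last step, because the point $0$ lies outside the domain $(0,\infty)$ and the concavity estimate with a zero endpoint is not immediately licensed. I would resolve this by noting that, being non-negative and operator monotone increasing, $f$ possesses a finite limit $f(0^+) = \inf_{x>0} f(x) \geq 0$, so that it extends continuously---and operator concavely---to $[0,\infty)$; the concavity inequality then holds at the boundary by a standard limiting argument, and discarding the non-negative term $(1-\lambda)f(0^+)$ leaves $f(X) \geq \tfrac{1}{c} f(cX)$. All remaining steps are direct invocations of operator monotonicity, operator concavity, Lemma \ref{lemma1.5}, and the Young inequality.
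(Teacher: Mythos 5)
Your proof is correct, but it takes a genuinely different route from the paper's. The paper disposes of the whole chain in three lines: after noting that analyticity lets one assume $f(x)>0$ for all $x>0$ (otherwise $f\equiv 0$), it observes that $1/f$ is a non-negative operator monotone \emph{decreasing} function and applies Lemma \ref{lemma1.1} with $g=1/f$, which gives $\frac{1}{c}f(A\sharp_\alpha B)^{-1}\leq f(c(A\sharp_\alpha B))^{-1}\leq f(A)^{-1}\sharp_\alpha f(B)^{-1}=\left(f(A)\sharp_\alpha f(B)\right)^{-1}$, and then inverts all terms using the antitonicity of the inverse. You instead prove both inequalities from scratch: the right-hand one by the chain Lemma \ref{lemma1.5} $\Rightarrow$ operator monotonicity $\Rightarrow$ operator concavity (which the paper itself endorses as equivalent to monotonicity for non-negative functions on $(0,\infty)$) $\Rightarrow$ Young --- in effect re-proving the relevant instance of Lemma \ref{lemma1.1} rather than dualizing it --- and the left-hand one by the subhomogeneity $f(cX)\leq cf(X)$ for $c\geq 1$, where you correctly flag and repair the boundary issue at $0$ via $f(0^{+})=\inf_{x>0}f(x)\geq 0$ and a limiting argument. (A shortcut you might note: since $f(cX)$ and $cf(X)$ are both functions of the single operator $X$, the left-hand inequality reduces to the scalar inequality $f(ct)\leq cf(t)$ on the spectrum of $X$, which follows from ordinary concavity and non-negativity without any operator-concavity machinery.) What your route buys: it avoids inversion entirely, so it needs no $f>0$ assumption and handles a vanishing $f$ without a separate case, and it makes transparent that the constant $c$ enters exactly through the reverse Young inequality of Lemma \ref{lemma1.5}. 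What the paper's route buys: brevity, and both inequalities drop out simultaneously from the single sandwich of Lemma \ref{lemma1.1} already proved for decreasing functions.
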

\begin{proof}
First note that since $f$ is analytic on $(0, \infty)$, we may assume that $f (x) > 0$
for all $x > 0$; otherwise $f$ is identically zero.
Also, since $f$ is  operator monotone function on $(0, \infty)$, so $\dfrac{1}{f}$ is a non-negative operator monotone decreasing function on $(0, \infty)$. By Applying  Lemma \ref{lemma1.1}  for $g = \dfrac{1}{f}$ we have
\begin{align*}  
\frac{1}{c} f( A \sharp_\alpha B)^{-1}  \leq f( c( A \sharp_\alpha B) )^{-1} \leq  f(A)^{-1}\sharp_\alpha f(B)^{-1}= \left( f(A)\sharp_\alpha f(B)\right) ^{-1}.
\end{align*}
Reversing the all sides gives the desired inequality. 
\end{proof}

\begin{theorem} \label{theorem1.2}
 Let $ f $ be a non-negative operator monotone function on $(0, \infty)$, $\frac{1}{p}+ \frac{1}{q}=1, p,q>1$,  and $0 < s A^p \leq B^q  \leq t A^p$ for some scalars $0 < s \leq t$. Then we have
\begin{equation}\label{theorem1.2_ineq01}
f\left(A^p\sharp_{1/q} B^q\right) \geq  \frac{1}{c} f(A^p)\sharp_{1/q} f(B^q).
\end{equation}
and
\begin{equation}\label{theorem1.2_ineq02}
\langle f\left(A^p\sharp_{1/q} B^q\right)x,x\rangle  \geq \frac{1}{c} \langle f(A^p)x,x\rangle^{1/p} \langle f(B^q)x,x\rangle^{1/q} 
\end{equation}
for all $x \in \mathcal{H}$. Where $c= \max \lbrace  K^R(s), K^R(t) \rbrace$ with the Kantorovich constant $K(\cdot)$ and $R= \max \lbrace 1/p, 1/q \rbrace$.
\end{theorem}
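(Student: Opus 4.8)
The plan is to prove the two inequalities in turn: \eqref{theorem1.2_ineq01} falls out of Lemma~\ref{lemma1.2} directly, while \eqref{theorem1.2_ineq02} requires a sharper, arithmetic-mean form of the operator estimate followed by the scalar Young inequality.

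For \eqref{theorem1.2_ineq01} I would apply Lemma~\ref{lemma1.2} with the substitutions $A\mapsto A^p$, $B\mapsto B^q$ and $\alpha=1/q$. The assumption $0<sA^p\le B^q\le tA^p$ is precisely the sandwich hypothesis of that lemma, and because $1-\alpha=1/p$ the exponent there is $R=\max\{\alpha,1-\alpha\}=\max\{1/p,1/q\}$, matching the stated constant $c$. Lemma~\ref{lemma1.2} then gives $c\,f(A^p\sharp_{1/q}B^q)\ge f(A^p)\sharp_{1/q}f(B^q)$, and dividing by $c>0$ yields \eqref{theorem1.2_ineq01}.

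The delicate part is \eqref{theorem1.2_ineq02}. The naive idea, namely to take inner products in \eqref{theorem1.2_ineq01} and then bound $\langle(f(A^p)\sharp_{1/q}f(B^q))x,x\rangle$ below by $\langle f(A^p)x,x\rangle^{1/p}\langle f(B^q)x,x\rangle^{1/q}$, is exactly the step that fails, and this is the main obstacle: for a vector state one has the \emph{reverse} inequality $\langle(G\sharp_{\alpha}H)x,x\rangle\le\langle Gx,x\rangle^{1-\alpha}\langle Hx,x\rangle^{\alpha}$ (Ando's inequality for positive unital maps), so routing the argument through the geometric mean of $f(A^p)$ and $f(B^q)$ points the wrong way. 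To sidestep this I would replace the geometric mean on the right by the arithmetic mean. Starting from the reverse Young inequality of Lemma~\ref{lemma1.5}, $A^p\nabla_{1/q}B^q\le c\,(A^p\sharp_{1/q}B^q)$, operator monotonicity of $f$ gives $f(A^p\sharp_{1/q}B^q)\ge f\!\bigl(\tfrac1c(A^p\nabla_{1/q}B^q)\bigr)$; and since a non-negative operator monotone function on $(0,\infty)$ is operator concave with $f(0^+)\ge0$, the scaling bound $f(\tfrac1c W)\ge\tfrac1c f(W)$ (valid as $\tfrac1c\in(0,1]$) together with operator concavity $f(A^p\nabla_{1/q}B^q)\ge f(A^p)\nabla_{1/q}f(B^q)$ produces the operator inequality
\begin{equation*}
f(A^p\sharp_{1/q}B^q)\ \ge\ \frac1c\,\bigl(f(A^p)\nabla_{1/q}f(B^q)\bigr).
\end{equation*}
(Note this already implies \eqref{theorem1.2_ineq01}, since the arithmetic mean dominates the geometric mean.)

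Taking the inner product against $x$ turns this into $\langle f(A^p\sharp_{1/q}B^q)x,x\rangle\ge\frac1c\bigl(\tfrac1p\langle f(A^p)x,x\rangle+\tfrac1q\langle f(B^q)x,x\rangle\bigr)$, and the scalar weighted arithmetic-geometric mean inequality $\tfrac1p a+\tfrac1q b\ge a^{1/p}b^{1/q}$ (which holds for all $a,b\ge0$ and so needs no normalization of $x$), applied with $a=\langle f(A^p)x,x\rangle$ and $b=\langle f(B^q)x,x\rangle$, gives \eqref{theorem1.2_ineq02}. Beyond the directional obstacle already noted, the only points that need care are the verification that no constant worse than $c=\max\{K^R(s),K^R(t)\}$ is introduced along this chain and the justification of $f(\tfrac1c W)\ge\tfrac1c f(W)$, which rests on extending $f$ continuously to $0$ with $f(0)\ge0$.
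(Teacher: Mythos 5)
Your proposal is correct and follows essentially the same route as the paper: inequality \eqref{theorem1.2_ineq01} via Lemma \ref{lemma1.2} with $A\mapsto A^p$, $B\mapsto B^q$, $\alpha=1/q$, and inequality \eqref{theorem1.2_ineq02} via the reverse Young inequality \eqref{lemma1.5_ineq01}, operator monotonicity and operator concavity of $f$, and the scalar AM--GM inequality. The only cosmetic difference is where the constant is pulled through $f$ (you use $f(\tfrac1c W)\geq\tfrac1c f(W)$, the paper uses the packaged first inequality $c\,f(A\sharp_\alpha B)\geq f(c(A\sharp_\alpha B))$ of Lemma \ref{lemma1.2}), which is the same underlying fact.
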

\begin{proof}

Putting $A:=A^p$, $B:=B^q$ and $\alpha:=1/q$ in Lemma \ref{lemma1.2}, we have \eqref{theorem1.2_ineq01}. Also, using the first inequality of Lemma \ref{lemma1.2}  with $A:=A^p$, $B:=B^q$ and $\alpha:=1/q$, we have
\begin{eqnarray*}
 c \langle f\left(A^p\sharp_{1/q} B^q\right)x,x\rangle &\geq&  \langle f\left(c (A^p\sharp_{1/q} B^q)\right)x,x\rangle\\
&\geq&   \langle f(A^p\nabla_{1/q} B^q) x,x\rangle \hspace*{1cm}(\text{op. monotonicity of}\,\, f \;\text{with} \,\,\eqref{lemma1.5_ineq01})\\
&\geq& \langle \big(\frac{1}{p}f(A^p)+\frac{1}{q}f(B^q)\big)x,x \rangle \hspace*{1cm} (\text{op. concavity of}\,\, f)\\
&=& \frac{1}{p} \langle f(A^p)x,x\rangle + \frac{1}{q} \langle f(A^q)x,x\rangle \\
&\geq& \langle f(A^p)x,x\rangle^{1/p} \langle f(B^q)x,x\rangle^{1/q} \hspace*{1cm} (\text{AM-AG inequality})
\end{eqnarray*}
which implies \eqref{theorem1.2_ineq02}.
\end{proof}

\begin{remark}
\begin{itemize}
\item[(a)]
The constant $c$ in Theorem \ref{theorem1.2} can be replaced by the constant $\acute{c}:= \max \{S(s),S(t)\}$ where  $S(x):=\dfrac{x^{\frac{1}{x-1}}}{e\log x^{\frac{1}{x-1}}}$ for $x>0$ with $x \neq 1$ is the so-called Specht ratio. See  \cite[Theorem 1]{GK2016}. 
In addition, for $\alpha \in (0,1)$ we have no ordering between the estimates $K^R(h)$, $R= \max \lbrace \alpha, 1-\alpha \rbrace$ and $S(h)$ for $h>0$ with $h \neq 1$ in general. Becasue we have numerical examples such that $K^{0.6}(0.01)-S(0.01) \simeq -1.30357$ and $K^{0.6}(5.0)-S(5.0) \simeq 0.0556589$.
For $\alpha = 1$, although $K(h) \geq S(h)$ for $h>0$ with $h \neq 1$,  it cannot be satisfied in the condition $\alpha:=1/q$  with $1/q+1/p=1$ of Theorem \ref{theorem1.2}.

\item[(b)] 
Our results given in both \eqref{theorem1.2_ineq01} and  \eqref{theorem1.2_ineq02} are weaker than ones in both \eqref{thmx1.1_ineq01} and \eqref{thmx1.1_ineq02}, since $K(h) \geq 1$  (and also $S(h)\geq 1$) for $h>0$ with $h \neq 1$. But our assumption for the function $f$ in Theorem \ref{theorem1.2} is better than one for the function $f$ in Theorem \ref{thmx1.1}.
\end{itemize}
\end{remark}
\begin{corollary}\label{cor1.1}
Let $1/p+1/q=1$ with $p,q >1$. For commuting positive invertible operators $A$ and $B$ with spectra contained in $(1,\infty)$ such that $sA^p\leq B^q \leq tA^p$ for some scalars $0 < s \leq t$, we have for any unit vector $x\in \mathcal{H}$,
\begin{equation}\label{cor1.1_ineq01}
\|(AB)^{1/2} x\|^2-1 \geq \frac{1}{c}\left(\|A^{p/2}x\|^2-1\right)^{1/p} \left(\|B^{q/2}x\|^2-1\right)^{1/q}, 
\end{equation}
where the constant $c$ is given in Theorem \ref{theorem1.2}.
\end{corollary}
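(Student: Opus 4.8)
The plan is to obtain the corollary as the special case of Theorem \ref{theorem1.2} corresponding to the affine function $f(t)=t-1$, after first simplifying the geometric mean by means of the commutativity hypothesis. Since $A$ and $B$ commute, so do $A^p$ and $B^q$, and therefore
$$
A^p\sharp_{1/q}B^q=(A^p)^{1-1/q}(B^q)^{1/q}=(A^p)^{1/p}(B^q)^{1/q}=AB,
$$
where I have used $1-\tfrac1q=\tfrac1p$; moreover $AB=A^{1/2}BA^{1/2}\geq0$ is positive. Thus the geometric mean occurring in Theorem \ref{theorem1.2} collapses to the ordinary product $AB$, which is exactly the operator whose quadratic form produces the left-hand side of \eqref{cor1.1_ineq01}.

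Next I would rewrite all three quantities in \eqref{cor1.1_ineq01} as quadratic forms $\langle f(\cdot)x,x\rangle$ with $f(t)=t-1$. For a unit vector $x$ and a positive operator $T$ one has $\|T^{1/2}x\|^2=\langle Tx,x\rangle$, and since $\langle x,x\rangle=1$,
$$
\langle (T-I)x,x\rangle=\langle Tx,x\rangle-1=\|T^{1/2}x\|^2-1.
$$
Taking $T=AB$, $T=A^p$ and $T=B^q$ shows that \eqref{cor1.1_ineq01} is literally inequality \eqref{theorem1.2_ineq02} applied to $f(t)=t-1$. So everything reduces to verifying that this $f$ is admissible in Theorem \ref{theorem1.2}.

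Here lies the only delicate point, and the main obstacle. Being affine and increasing, $f(t)=t-1$ is operator monotone and, being affine, both operator convex and operator concave, so the two structural ingredients used in the proof of Theorem \ref{theorem1.2}—operator monotonicity combined with \eqref{lemma1.5_ineq01}, and operator concavity—are available. The difficulty is the standing non-negativity assumption: $f(t)=t-1<0$ on $(0,1)$, so Theorem \ref{theorem1.2} cannot be invoked verbatim on all of $(0,\infty)$. This is resolved by the spectral hypothesis. Because $A$ and $B$ have spectra in $(1,\infty)$, every operator entering the proof of Theorem \ref{theorem1.2}—namely $A^p$, $B^q$, the geometric mean $A^p\sharp_{1/q}B^q=AB$, the arithmetic mean $A^p\nabla_{1/q}B^q$, and the scaled operator $c(A^p\sharp_{1/q}B^q)$, the last because $c\geq1$—has spectrum contained in $(1,\infty)$. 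On that interval $f$ is strictly positive, operator monotone and operator concave, while $1/f(t)=1/(t-1)$ is a positive operator monotone decreasing function. Consequently Lemmas \ref{lemma1.1} and \ref{lemma1.2}, and hence the whole chain of inequalities in the proof of Theorem \ref{theorem1.2}, remain valid once the domain $(0,\infty)$ is replaced throughout by $(1,\infty)$.

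Finally I would record that the fractional powers on the right-hand side make sense: since $A^p>I$ and $B^q>I$, the quantities $\langle f(A^p)x,x\rangle=\|A^{p/2}x\|^2-1$ and $\langle f(B^q)x,x\rangle=\|B^{q/2}x\|^2-1$ are strictly positive. Feeding $f(t)=t-1$ through \eqref{theorem1.2_ineq02} and substituting the three norm identities then yields \eqref{cor1.1_ineq01}, completing the argument.
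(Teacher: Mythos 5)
Your proposal is correct and follows exactly the paper's route: the paper's entire proof is ``taking $f(t)=t-1$ on $(1,\infty)$ in Theorem \ref{theorem1.2}'', which is precisely your argument. You additionally spell out the details the paper leaves implicit --- the collapse $A^p\sharp_{1/q}B^q=AB$ under commutativity, the identity $\langle (T-I)x,x\rangle=\|T^{1/2}x\|^2-1$ for unit vectors, and the justification that the spectral hypothesis in $(1,\infty)$ (together with $c\geq 1$) lets Theorem \ref{theorem1.2} apply to a function positive only on $(1,\infty)$ --- which is careful filling-in of the same proof rather than a different one.
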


\begin{proof}
Taking $f(t)= t-1$ on $(1,\infty)$ in Theorem \ref{theorem1.2}, we get the desiered result.
\end{proof}

From \eqref{theorem1.2_ineq01}, we also have the following corollaries.
\begin{corollary}\label{cor1.2}
Let $1/p+1/q=1$ with $p,q >1$ and $f$ be an operator monotone function on $(0,\infty)$. For  commuting positive invertible operators $A$ and $B$ such that $sA^p\leq B^q \leq tA^p$ for some scalars $0 < s \leq t$, we have
\begin{equation}\label{cor1.2_ineq01}
f(AB) \geq \frac{1}{c}   \; f(A^p)^{1/p} f(B^q)^{1/q},
\end{equation}
where the constant $c$ is given in Theorem \ref{theorem1.2}.
\end{corollary}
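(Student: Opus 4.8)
The plan is to obtain Corollary \ref{cor1.2} as a direct specialization of Theorem \ref{theorem1.2}, exploiting the fact that in the commuting setting the $\alpha$-geometric mean degenerates into an ordinary operator product. The hypotheses of Theorem \ref{theorem1.2} are met verbatim: we have $1/p+1/q=1$ with $p,q>1$, the non-negative operator monotone function $f$, and the sandwich condition $sA^p\leq B^q\leq tA^p$. So I intend to start from inequality \eqref{theorem1.2_ineq01}, namely $f(A^p\sharp_{1/q}B^q)\geq \frac{1}{c}\,f(A^p)\sharp_{1/q}f(B^q)$, and then simplify both geometric means using commutativity.

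First I would record the commutativity bookkeeping. Since $A$ and $B$ commute, so do $A^p$ and $B^q$; moreover $f(A^p)$ and $f(B^q)$ commute as well, being continuous functions of operators drawn from the single commuting pair $(A,B)$. Recall, as noted after \eqref{def_alpha_geo_mean}, that for commuting strictly positive $X,Y$ the identity $X\sharp_\alpha Y = X^{1-\alpha}Y^\alpha$ holds.

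Next I would evaluate the two means explicitly. On the left-hand side, using $1-\tfrac1q=\tfrac1p$, I compute $A^p\sharp_{1/q}B^q = (A^p)^{1/p}(B^q)^{1/q}=AB$, so that the left side of \eqref{theorem1.2_ineq01} reduces to $f(AB)$. On the right-hand side, the same product formula applied to the commuting pair $f(A^p),f(B^q)$ gives $f(A^p)\sharp_{1/q}f(B^q)=f(A^p)^{1/p}f(B^q)^{1/q}$. Substituting these two identities into \eqref{theorem1.2_ineq01} yields \eqref{cor1.2_ineq01} at once, with the constant $c$ inherited unchanged from Theorem \ref{theorem1.2}.

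There is no substantive obstacle here; the argument is a clean reduction. The only point that genuinely requires a moment's attention is justifying that $f(A^p)$ and $f(B^q)$ commute, so that the product form of the geometric mean is legitimately applied on the right-hand side and not merely on the left. As indicated above, this follows immediately from the fact that both are functions of the commuting pair $(A,B)$, so the remainder is a purely formal substitution into \eqref{theorem1.2_ineq01}.
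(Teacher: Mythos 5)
Your proposal is correct and is exactly the paper's route: the authors derive Corollary \ref{cor1.2} directly from \eqref{theorem1.2_ineq01} by using commutativity to collapse $A^p\sharp_{1/q}B^q$ into $AB$ and $f(A^p)\sharp_{1/q}f(B^q)$ into $f(A^p)^{1/p}f(B^q)^{1/q}$. Your extra remark verifying that $f(A^p)$ and $f(B^q)$ commute is a sound (if routine) detail the paper leaves implicit.
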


\begin{corollary}\label{cor1.3}
Let $1/p+1/q=1$ with $p,q >1$ and $f$ be a non-negative increasing function on $(0,\infty)$ and $a_i,b_i$ be positive numbers such that $0<sa_i^p \leq b_i^q \leq ta_i^p$ for some scalars $0 < s \leq t$. Then we have
\begin{equation}\label{cor1.3_ineq01}
\sum_{i=1}^n f(a_ib_i) \geq \frac{1}{c}  \left(\sum_{i=1}^n f(a_i^p) \right)^{1/p} \left(\sum_{i=1}^n  f(b_i^q) \right)^{1/q} 
\end{equation}
where the constant $c$ is given in Theorem \ref{theorem1.2}.
\end{corollary}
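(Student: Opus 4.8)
The plan is to deduce the scalar inequality \eqref{cor1.3_ineq01} from the vector form \eqref{theorem1.2_ineq02} of Theorem \ref{theorem1.2} by specializing to diagonal operators. On $\mathcal{H}=\mathbb{C}^n$ I would set $A=\mathrm{diag}(a_1,\dots,a_n)$ and $B=\mathrm{diag}(b_1,\dots,b_n)$, and take $x=(1,1,\dots,1)^{\!\top}$ to be the all-ones vector. These $A,B$ are commuting, strictly positive, invertible operators, and the coordinatewise hypotheses $0<sa_i^p\le b_i^q\le ta_i^p$ translate verbatim into the operator sandwich $0<sA^p\le B^q\le tA^p$, so the assumptions of Theorem \ref{theorem1.2} are satisfied with exactly the same constant $c=\max\{K^R(s),K^R(t)\}$.

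First I would carry out the identification of the three inner products with the three sums. Since $A$ and $B$ commute and are diagonal, the $\alpha$-geometric mean reduces to a product of powers, so that $A^p\sharp_{1/q}B^q=(A^p)^{1/p}(B^q)^{1/q}=\mathrm{diag}(a_1b_1,\dots,a_nb_n)$, and the continuous functional calculus acts entrywise. Hence $f(A^p\sharp_{1/q}B^q)=\mathrm{diag}(f(a_ib_i))$, $f(A^p)=\mathrm{diag}(f(a_i^p))$ and $f(B^q)=\mathrm{diag}(f(b_i^q))$, which gives
\begin{equation*}
\langle f(A^p\sharp_{1/q}B^q)x,x\rangle=\sum_{i=1}^n f(a_ib_i),\qquad
\langle f(A^p)x,x\rangle=\sum_{i=1}^n f(a_i^p),\qquad
\langle f(B^q)x,x\rangle=\sum_{i=1}^n f(b_i^q).
\end{equation*}
Because \eqref{theorem1.2_ineq02} holds for \emph{every} $x\in\mathcal{H}$ (no normalization is needed, the inequality being homogeneous of degree two in $x$), substituting these identities into \eqref{theorem1.2_ineq02} produces \eqref{cor1.3_ineq01} immediately.

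The delicate point, and the one I expect to be the main obstacle, is the hypothesis on $f$. To invoke Theorem \ref{theorem1.2} one needs $f$ to be a non-negative \emph{operator monotone} function, which is stronger than plain monotonic increase: the proof of \eqref{theorem1.2_ineq02} passes through the step $f(A^p\nabla_{1/q}B^q)\ge \tfrac1p f(A^p)+\tfrac1q f(B^q)$, i.e. through the concavity of $f$, and it is precisely this concavity—together with Lemma \ref{lemma1.5} and the scalar AM--GM step—that installs the correct Hölder-type direction in the sum. Concavity cannot be dispensed with: already for $n=1$ a strictly convex increasing function such as $f(x)=e^x$ violates \eqref{cor1.3_ineq01} for admissible data (e.g. $p=q=2$, $b_1=2a_1$ with $a_1$ large), so merely requiring $f$ increasing is not enough. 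Thus I would read the corollary under the hypotheses of Theorem \ref{theorem1.2}; with that in force the diagonal specialization above is essentially a one-line argument, the only genuine work being the entrywise evaluation of the geometric mean and of the functional calculus described in the second paragraph.
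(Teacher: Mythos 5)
Your proof is correct and is essentially the paper's own argument: the paper offers no separate proof of Corollary \ref{cor1.3}, presenting it as an immediate consequence of Theorem \ref{theorem1.2}, and your diagonal matrices with the all-ones vector realize exactly that specialization (the entrywise evaluation $A^p\sharp_{1/q}B^q=\mathrm{diag}(a_ib_i)$ and the translation of the scalar sandwich into $0<sA^p\leq B^q\leq tA^p$ are both right). Two of your remarks go beyond the paper and are worth recording. First, you correctly route the argument through the vector inequality \eqref{theorem1.2_ineq02} rather than the operator inequality \eqref{theorem1.2_ineq01}, which is what the paper's preceding sentence (``From \eqref{theorem1.2_ineq01}, we also have the following corollaries'') nominally cites: from \eqref{theorem1.2_ineq01} one only obtains the entrywise bound $f(a_ib_i)\geq \frac{1}{c}\,f(a_i^p)^{1/p}f(b_i^q)^{1/q}$, and after summing, H\"older's inequality runs in the wrong direction to yield \eqref{cor1.3_ineq01}; the all-ones-vector substitution into \eqref{theorem1.2_ineq02} is the step that actually works. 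Second, your flag on the hypothesis is a genuine catch: ``non-negative increasing'' alone does not suffice, and your counterexample is valid --- with $n=1$, $p=q=2$, $b_1=2a_1$, $s=t=4$ one has $c=K(4)^{1/2}=5/4$, and $f(x)=e^x$ gives $e^{2a_1^2}\geq \frac{4}{5}e^{5a_1^2/2}$, false for $a_1$ large --- so the corollary must be read under the hypotheses of Theorem \ref{theorem1.2} (operator monotonicity), or at least with $f$ concave, which is all the scalar argument needs and is what the paper's subsequent application in Corollary \ref{cor1.4} (an affine $f$) in fact uses.
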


 The following result provides a counterpart of Theorem \ref{thmx01}. 
\begin{corollary}\label{cor1.4}
Let $1/p+1/q=1$ with $p,q >1$.
For positive numbers $x_i$ and $y_i$ such that $\sum_{i=2}^nx_i^p \geq x_1^p$, $\sum_{i=2}^ny_i^q \geq y_1^q$, $\sum_{i=2}^nx_iy_i \geq x_1y_1$ and $0<s \left(\dfrac{x_i}{x_1}\right)^p  \leq \left(\dfrac{y_i}{y_1}\right)^q \leq t\left(\dfrac{x_i}{x_1}\right)^p$ for some scalars $0 < s \leq t$. Then we have
$$
\sum_{i=2}^nx_iy_i -x_1y_1 \geq \frac{1}{c}\left(\sum_{i=2}^nx_i^p-x_1^p\right)^{1/p}\left(\sum_{i=2}^ny_i^q-y_1^q\right)^{1/q}
$$
where the constant $c$ is given in Theorem \ref{theorem1.2}.
\end{corollary}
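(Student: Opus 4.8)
The plan is to reduce the stated inequality to the operator inequality of Corollary \ref{cor1.1} by realizing the three scalar sums as quadratic forms $\langle\,\cdot\,x,x\rangle$ for a pair of commuting diagonal operators and a single unit vector $x$. Since every quantity in the statement is homogeneous of degree one in the $x_i$ and of degree one in the $y_i$, and the sandwich hypothesis is phrased through the ratios $x_i/x_1$ and $y_i/y_1$, I would first normalize $x_1=y_1=1$; the hypotheses then read $\sum_{i\ge2}x_i^p\ge1$, $\sum_{i\ge2}y_i^q\ge1$, $\sum_{i\ge2}x_iy_i\ge1$ and $s\,x_i^p\le y_i^q\le t\,x_i^p$, while the claim becomes $\sum_{i\ge2}x_iy_i-1\ge\frac1c\big(\sum_{i\ge2}x_i^p-1\big)^{1/p}\big(\sum_{i\ge2}y_i^q-1\big)^{1/q}$. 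The decisive structural point is that the anchor $1=x_1y_1$ must be subtracted \emph{exactly}, not scaled by the Kantorovich factor; this is precisely what the choice $f(t)=t-1$ in Corollary \ref{cor1.1} achieves, whereas the plain reverse-H\"older estimate of Corollary \ref{cor1.3} (with $f$ the identity) is too weak here, since in it the constant $1/c$ multiplies the \emph{whole} geometric mean, including the part corresponding to $x_1y_1$.

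For the construction I would work on $\mathcal{H}=\mathbb{C}^{\,n-1}$ with orthonormal basis $e_2,\dots,e_n$, choose weights $w_i>0$ with $\sum_{i\ge2}w_i=1$, and set $A=\mathrm{diag}(\alpha_i)$, $B=\mathrm{diag}(\beta_i)$ with $\alpha_i=x_i\,w_i^{-1/p}$ and $\beta_i=y_i\,w_i^{-1/q}$, together with the unit vector $x=\sum_{i\ge2}\sqrt{w_i}\,e_i$. Because $1/p+1/q=1$, a direct computation gives $\langle A^px,x\rangle=\sum_{i\ge2}\alpha_i^pw_i=\sum_{i\ge2}x_i^p$, $\langle B^qx,x\rangle=\sum_{i\ge2}y_i^q$, and $\langle ABx,x\rangle=\sum_{i\ge2}\alpha_i\beta_iw_i=\sum_{i\ge2}x_iy_i$; moreover $\alpha_i^p=x_i^p/w_i$ and $\beta_i^q=y_i^q/w_i$, so the scalar sandwich $s\,x_i^p\le y_i^q\le t\,x_i^p$ is exactly equivalent to the operator sandwich $sA^p\le B^q\le tA^p$ (the operators being diagonal, hence commuting, positive and invertible). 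Feeding this into Corollary \ref{cor1.1} and undoing the normalization then yields the asserted inequality, the constant $c=\max\{K^R(s),K^R(t)\}$ with $R=\max\{1/p,1/q\}$ being inherited verbatim.

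The hard part will be the admissibility of $f(t)=t-1$, which in Corollary \ref{cor1.1} forces all spectra into $(1,\infty)$, i.e. $\alpha_i,\beta_i>1$, equivalently $w_i<\min\{x_i^p,y_i^q\}$ for every $i$. Weights with $\sum w_i=1$ meeting this exist as soon as $\sum_{i\ge2}\min\{x_i^p,y_i^q\}>1$, and I would dispose of the remaining cases as follows. If $\sum_{i\ge2}x_i^p=1$ or $\sum_{i\ge2}y_i^q=1$, the right-hand side vanishes and the inequality collapses to the hypothesis $\sum_{i\ge2}x_iy_i\ge1$, so one may assume strict domination. In that case I would relax the term-by-term prescription and instead represent the target moments $\big(\sum x_i^p,\sum y_i^q,\sum x_iy_i\big)$ as an expectation of points $(\alpha^p,\beta^q,\alpha\beta)$ lying in the admissible region $\{\alpha,\beta>1,\ s\alpha^p\le\beta^q\le t\alpha^p\}$ by refining the discrete measure; this is feasible because the forward H\"older inequality gives $\sum x_iy_i\le\big(\sum x_i^p\big)^{1/p}\big(\sum y_i^q\big)^{1/q}$, so the third moment never exceeds the value dictated by concavity of the geometric mean, leaving room to place all atoms with coordinates exceeding $1$. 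Alternatively, a short continuity argument—applying Corollary \ref{cor1.1} for spectra in $(1,\infty)$ and letting them approach the boundary—removes the strictness. Carrying out this spectral bookkeeping is the only genuinely delicate step; the moment computation above is routine.
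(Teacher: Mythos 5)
Your construction is, at its heart, the paper's own proof in operator dress. The paper proves this corollary by applying Corollary \ref{cor1.3} to the shifted function $f(t)=t-\frac{1}{n-1}$ over the $n-1$ indices $i=2,\dots,n$, obtaining \eqref{cor1.4_ineq01}, and then substituting $a_i=x_i/x_1$, $b_i=y_i/y_1$. If in your scheme you take the uniform weights $w_i=\frac{1}{n-1}$, then $\alpha_i^p=(n-1)a_i^p$ and $\beta_i^q=(n-1)b_i^q$, and Corollary \ref{cor1.1} yields literally the paper's inequality \eqref{cor1.4_ineq01}; your spectral condition $\alpha_i,\beta_i>1$ is then exactly the condition $a_i^p,\,b_i^q>\frac{1}{n-1}$ that the paper needs (tacitly) so that its $f$, defined on $(\frac{1}{n-1},\infty)$, is non-negative at the evaluated points. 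Your moment computations, the cancellation of $w_i$ in the sandwich condition, the insistence that the anchor $x_1y_1$ be subtracted exactly rather than scaled by $1/c$, and the disposal of the degenerate cases $\sum_{i\ge2}x_i^p=x_1^p$ or $\sum_{i\ge2}y_i^q=y_1^q$ are all correct.

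The genuine gap is precisely the step you flag as delicate, and neither of your fallbacks closes it --- indeed nothing can, because the statement as written fails without a per-term hypothesis. Admissible weights exist if and only if (after normalizing $x_1=y_1=1$) $\sum_{i\ge2}\min\{x_i^p,y_i^q\}>1$, and this does not follow from the three aggregate hypotheses plus the sandwich. Your measure-refinement fallback cannot manufacture it: writing the atoms as $(u_j,v_j)=(w_j\alpha_j^p,\,w_j\beta_j^q)$, the constraints $\sum_j u_j=\sum_i x_i^p$, $\sum_j v_j=\sum_i y_i^q$, $\sum_j u_j^{1/p}v_j^{1/q}=\sum_i x_iy_i$ with $v_j/u_j\in[s,t]$ can pin the representing measure at the extreme ratios, leaving $\sum_j\min\{u_j,v_j\}$ below $1$. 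Concretely, take $p=q=2$, $n=3$, $x_1=y_1=1$, $x_2^2=y_3^2=2$, $x_3^2=y_2^2=\frac{1}{5}$: all hypotheses hold with $s=\frac{1}{10}$, $t=10$ (so $c=K(10)^{1/2}=\sqrt{121/40}\approx 1.74$ by the formula in Theorem \ref{theorem1.2}), the min-sum is $\frac{2}{5}<1$, and the asserted inequality itself fails numerically: the left side is $2\sqrt{2/5}-1\approx 0.265$ while the right side is $\frac{1}{c}\cdot 1.2\approx 0.69$. Since the conclusion is false for this data, no refinement or continuity argument (the latter in any case only handles the boundary case $\sum_i\min\{x_i^p,y_i^q\}=1$) can succeed; the paper's proof harbors the same unjustified step, hidden in the restriction of $f(t)=t-\frac{1}{n-1}$ to $(\frac{1}{n-1},\infty)$. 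Under the missing hypothesis $(x_i/x_1)^p,\,(y_i/y_1)^q>\frac{1}{n-1}$ for each $i\ge 2$, your uniform-weight specialization completes the proof cleanly, and the two fallback sketches should simply be deleted.
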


\begin{proof}
Firstly we note that the inequality \eqref{cor1.3_ineq01}  is true  for any $n\in\mathbb{N}$, as $i=1,\cdots,n-1$ so that we may relabel as it is true for $i=2,\cdots,n$.
Take a function $f(t):=t-\frac{1}{n-1}$, ($n \geq 2$) on $(\frac{1}{n-1}, \infty)$ in Corollary \ref{cor1.3}, then $f(t)$ is non-negative and monotone increasing on $(1, \infty)$. Then we obtain the inequality:
\begin{align}\label{cor1.4_ineq01}
\sum_{i=2}^na_ib_i -1 \geq \frac{1}{c}\left(\sum_{i=2}^na_i^p-1\right)^{1/p}\left(\sum_{i=2}^nb_i^q-1\right)^{1/q}.
\end{align}
Let $x_1,y_1 >0$. Putting $a_i :=\frac{x_i}{x_1}$ and $b_i :=\frac{y_i}{y_1}$ for positive numbers $x_i$ and $y_i$ for $i=2,\cdots,n$ in the above, we obtain
$$
\sum_{i=2}^nx_iy_i -x_1y_1 \geq \frac{1}{c}\left(\sum_{i=2}^nx_i^p-x_1^p\right)^{1/p}\left(\sum_{i=2}^ny_i^q-y_1^q\right)^{1/q},
$$
under the assumptions $\sum_{i=2}^nx_i^p \geq x_1^p$, $\sum_{i=2}^ny_i^q \geq y_1^q$ and $\sum_{i=2}^nx_iy_i \geq x_1y_1$.
\end{proof}

\section{Acz\'{e}l inequalities with the generalized Kantorovich constant for doubly concave function}
In the next we study an analogous of Theorem \ref{theorem1.2}, with the generalized Kantorovich constant $K(w, \alpha)$. For this purpose,  the assumption of doubly concavity of $f(t)$ is needed. 
\begin{definition}
A  non-negative continuous function $f(t)$  defined on a positive interval $J \subset [0,\infty)$, is said to be doubly concave if:
\begin{enumerate}
 \item $f(t)$ is concave in the usual sense;
 \item $f(t)$ is geometrically concave, i.e., $ g(x^\alpha y^{1-\alpha}) \geq g(x)^\alpha g(y)^{1-\alpha}$  for all $x, y \in I$, and $\alpha \in [0, 1]$.
\end{enumerate}
\end{definition}
\par
\noindent
If $f(t)$ and $g(t)$ are doubly concave on $J$, then so is their geometric mean $f(t)^\alpha g(t)^{1-\alpha}$
for $\alpha \in [0, 1]$ and their minimum $\min\lbrace f(t), g(t) \rbrace$. These properties say that
 there are a lot of doubly concave functions.  
 The most important examples of doubly concave functions on $J= [0,\infty)$ are $t \mapsto t^p$ with exponent $p \in [0, 1]$. Other simple examples are $t \mapsto t/(t+1)$,
$t \mapsto t/ \sqrt{t+1}$ and $t \mapsto 1- e^{-t}$. On $J= [1,\infty)$, the functions $\log t$ and $(t-1)^p,  p\in [0, 1]$, and on $J= [0,1]$, the function $-t \log t$ are also doubly concave. For more examples see \cite{BH2014}.

Now, we are ready to give a result  via the constant $K(w,a)$ occuring in the following lemma.
\begin{lemma}{\bf (\cite[Lemma 8]{BLFS2009})} \label{lemma 1.3}
Let $A, B > 0$ with $0 <s A \leq B \leq t A$ for some scalars $0< s \leq t$ with $w = t/s$. Then, for all vectors $x$
and all $\alpha \in [0 , 1]$
\begin{align*}
\langle A\sharp_\alpha B x , x\rangle \leq \langle A x, x\rangle^{1-\alpha} \langle B x, x\rangle^{\alpha}\leq  K^{-1}(w, \alpha) \langle A\sharp_\alpha B x, x\rangle,
\end{align*}
where $K(w, \alpha)$ is the generalized Kantorovich constant defined for $w > 0$ by:
\begin{align}\label{lemma1.3_ineq01}
K(w, \alpha):= \dfrac{\left(w^\alpha - w\right)}{(\alpha-1)(w-1)} \left( \dfrac{\alpha - 1}{\alpha}\dfrac{w^\alpha-1}{w^\alpha - w} \right)^\alpha.
\end{align}
\end{lemma}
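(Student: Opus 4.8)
The plan is to reduce the two operator inequalities to a single scalar estimate through the usual congruence substitution. Set $C := A^{-1/2}BA^{-1/2}$ and $y := A^{1/2}x$; the hypothesis $0 < sA \le B \le tA$ is then equivalent to $s \le C \le t$, so the spectrum of $C$ lies in $[s,t]$ and $w=t/s$ is the ratio of its bounds. Since $A\sharp_\alpha B = A^{1/2}C^\alpha A^{1/2}$, one computes $\langle Ax,x\rangle = \|y\|^2$, $\langle Bx,x\rangle = \langle Cy,y\rangle$ and $\langle (A\sharp_\alpha B)x,x\rangle = \langle C^\alpha y,y\rangle$. The case $x=0$ is trivial; otherwise put $u := y/\|y\|$. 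After cancelling the common factor $\|y\|^2$, both inequalities of the lemma are seen to be equivalent to the single two-sided statement
\[
K(w,\alpha)\,\langle Cu,u\rangle^\alpha \;\le\; \langle C^\alpha u,u\rangle \;\le\; \langle Cu,u\rangle^\alpha, \qquad s \le C \le t,\ \|u\|=1 .
\]

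For the right-hand inequality I would use that $t\mapsto t^\alpha$ is concave on $(0,\infty)$ for $\alpha\in[0,1]$, so the operator Jensen inequality applied to the vector state $X\mapsto\langle Xu,u\rangle$ gives $\langle C^\alpha u,u\rangle \le \langle Cu,u\rangle^\alpha$. Alternatively, and without quoting Jensen, one may start from the parametrized Young inequality $A\sharp_\alpha B \le (1-\alpha)\lambda^{-\alpha}A + \alpha\lambda^{1-\alpha}B$, valid for every $\lambda>0$, evaluate the state, and minimize the right-hand side over $\lambda$; the minimum is attained at $\lambda = \langle Ax,x\rangle/\langle Bx,x\rangle$ and equals exactly $\langle Ax,x\rangle^{1-\alpha}\langle Bx,x\rangle^\alpha$. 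This establishes the first inequality of the lemma.

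For the reverse inequality, which carries the constant and is the crux, I would exploit that the chord of $t\mapsto t^\alpha$ through its endpoints lies below the graph on $[s,t]$. Writing the affine interpolant
\[
\ell(m) := \frac{(s^\alpha t - s t^\alpha) + (t^\alpha - s^\alpha)m}{t-s},
\]
concavity yields $m^\alpha \ge \ell(m)$ on $[s,t]$, hence $C^\alpha \ge \ell(C)$ by functional calculus, and therefore $\langle C^\alpha u,u\rangle \ge \langle \ell(C)u,u\rangle = \ell(\langle Cu,u\rangle)$ since $\ell$ is affine and $u$ is a unit vector. It then suffices to prove the scalar bound $\ell(m) \ge K(w,\alpha)\,m^\alpha$ for all $m\in[s,t]$.

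The main obstacle is this last scalar step, where the explicit constant is forced. I would minimize $m\mapsto \ell(m)/m^\alpha$ over $[s,t]$: differentiating and using the relation $m\,\ell'(m) = \alpha\,\ell(m)$ at the critical point locates the interior minimizer $m_0 = \frac{\alpha}{1-\alpha}\cdot\frac{s(w - w^\alpha)}{w^\alpha - 1}$, and substituting $m_0$ back (the powers of $s$ cancel, confirming that the value depends on $w$ alone) simplifies precisely to the closed form $K(w,\alpha)$ in \eqref{lemma1.3_ineq01}. Thus $\min_{m\in[s,t]}\ell(m)/m^\alpha = K(w,\alpha)$, so $\ell(m)\ge K(w,\alpha)\,m^\alpha$ on $[s,t]$; combined with the operator estimate above this gives $\langle C^\alpha u,u\rangle \ge K(w,\alpha)\langle Cu,u\rangle^\alpha$. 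Reversing the substitution recovers both inequalities of the lemma.
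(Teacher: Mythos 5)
The paper offers no proof of this lemma: it is imported verbatim from \cite[Lemma 8]{BLFS2009} (and ultimately rests on the Mond--Pe\v{c}ari\'{c} theory in \cite{FMPS2005}), so there is no internal argument to compare against. Your proposal is correct and supplies essentially the original derivation. The congruence $C=A^{-1/2}BA^{-1/2}$, $y=A^{1/2}x$ does reduce both inequalities to the two-sided scalar-state estimate $K(w,\alpha)\langle Cu,u\rangle^{\alpha}\leq\langle C^{\alpha}u,u\rangle\leq\langle Cu,u\rangle^{\alpha}$ for $s\leq C\leq t$ and $\|u\|=1$; the upper bound follows from Jensen applied to the spectral probability measure of $C$ in the state $u$ (ordinary concavity of $m\mapsto m^{\alpha}$ suffices --- operator concavity is not needed), and your parametrized-Young alternative with optimizer $\lambda=\langle Ax,x\rangle/\langle Bx,x\rangle$ also checks out. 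For the reverse, your chord function $\ell(m)=\bigl((s^{\alpha}t-st^{\alpha})+(t^{\alpha}-s^{\alpha})m\bigr)/(t-s)$ interpolates $m^{\alpha}$ at $s,t$, and the minimization of $\ell(m)/m^{\alpha}$ is computed correctly: the critical point is $m_0=\frac{\alpha}{1-\alpha}\,s\,\frac{w-w^{\alpha}}{w^{\alpha}-1}$, the value there is $\frac{b}{1-\alpha}m_0^{-\alpha}$ with $b=s^{\alpha}\frac{w-w^{\alpha}}{w-1}$, the powers of $s$ cancel, and the result is exactly \eqref{lemma1.3_ineq01}. Two points you assert without verification, both true and worth recording: (i) $m_0\in[s,t]$, since $m_0\geq s$ is equivalent to $\alpha w+(1-\alpha)\geq w^{\alpha}$ and $m_0\leq t$ to $w^{1-\alpha}\leq(1-\alpha)w+\alpha$, both instances of weighted AM--GM; in fact for the needed inequality $\ell(m)\geq K(w,\alpha)m^{\alpha}$ you only require the global minimum over $(0,\infty)$, and $m_0$ is the unique critical point with $\ell(m)/m^{\alpha}\to\infty$ at both ends when $\alpha\in(0,1)$ and $w>1$; (ii) the degenerate cases $w=1$ (i.e.\ $s=t$, so $B=sA$) and $\alpha\in\{0,1\}$, where the closed form \eqref{lemma1.3_ineq01} is indeterminate and must be read as its limit $K=1$, both sides of the lemma then being equalities. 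With those remarks added, your argument is a complete, self-contained proof of the cited lemma.
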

It is known that $K(w, \alpha) \in (0, 1]$ for $\alpha \in [0 , 1]$. See \cite{FMPS2005} for some important properties of $K(w, \alpha)$.

\begin{lemma}{\bf (\cite[Theorem 1]{GK2017})} \label{lemma1.4}
 Let $f$ be an increasing doubly concave function on $[0, \infty)$ and $0 <s A \leq B \leq t A$ for some scalars $0 < s \leq t$ with $w = t/s$. Then for all $\alpha \in [0 , 1]$ and $ k=1,2,\cdots,n$,
\begin{align*} \label{}
K^{-1}(w,\alpha)\lambda_k \left(f( A \sharp_\alpha B )\right) \geq \lambda_k \left( f\left(   K^{-1}(w,\alpha)(A \sharp_\alpha B) \right)\right)\geq \lambda_k \left(f(A) \sharp_\alpha f(B)\right).
\end{align*}
where $K(w, \alpha)$ is the generalized Kantorovich constant defined as \eqref{lemma1.3_ineq01} .
\end{lemma}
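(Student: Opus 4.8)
The plan is to prove the two inequalities in the displayed chain separately. Throughout write $c := K^{-1}(w,\alpha)$ and note that $c \geq 1$, since $K(w,\alpha) \in (0,1]$; set $Y := A\sharp_\alpha B > 0$ and order eigenvalues decreasingly, $\lambda_1 \geq \cdots \geq \lambda_n$, so that Weyl's monotonicity principle and the Courant--Fischer min-max theorem are available. For the left inequality $c\,\lambda_k\!\left(f(Y)\right) \geq \lambda_k\!\left(f(cY)\right)$, I would first record the elementary scalar fact that an increasing concave $f \geq 0$ on $[0,\infty)$ (hence $f(0)\geq 0$) satisfies $f(cx)\leq c\,f(x)$ for every $x\geq 0$ and every $c\geq 1$: concavity gives $f(tx)\geq t f(x)+(1-t)f(0)\geq t f(x)$ for $t\in[0,1]$, and choosing $t=1/c$ after replacing $x$ by $cx$ yields the claim. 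Since this holds at every point of the spectrum of $Y$, and $Y$ commutes with $cY$, the functional calculus upgrades it to the operator inequality $c\,f(Y)\geq f(cY)$. Weyl monotonicity then gives $\lambda_k\!\left(c\,f(Y)\right)\geq\lambda_k\!\left(f(cY)\right)$, and $\lambda_k\!\left(c\,f(Y)\right)=c\,\lambda_k\!\left(f(Y)\right)$ because $c>0$ is scalar. This step uses only monotonicity and concavity, not the sandwich hypothesis.

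For the right inequality $\lambda_k\!\left(f(cY)\right)\geq\lambda_k\!\left(f(A)\sharp_\alpha f(B)\right)$ the plan is first to derive a pointwise bound over unit vectors. For a unit vector $x$ I would chain four estimates: (i) the left inequality of Lemma~\ref{lemma 1.3} applied to the pair $f(A),f(B)$ (this part, $\langle(P\sharp_\alpha Q)x,x\rangle \leq \langle Px,x\rangle^{1-\alpha}\langle Qx,x\rangle^{\alpha}$, needs no sandwich condition); (ii) Jensen's inequality for the concave $f$, namely $\langle f(A)x,x\rangle\leq f(\langle Ax,x\rangle)$ and likewise for $B$, raised to the nonnegative powers $1-\alpha$ and $\alpha$; (iii) geometric concavity of $f$, i.e.\ $f(u)^{1-\alpha}f(v)^{\alpha}\leq f\!\left(u^{1-\alpha}v^{\alpha}\right)$ with $u=\langle Ax,x\rangle$ and $v=\langle Bx,x\rangle$; and (iv) the right inequality of Lemma~\ref{lemma 1.3} applied to $A,B$ themselves (which do satisfy $sA\leq B\leq tA$ with $w=t/s$), giving $\langle Ax,x\rangle^{1-\alpha}\langle Bx,x\rangle^{\alpha}\leq c\,\langle Yx,x\rangle$, together with monotonicity of $f$. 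Concatenating (i)--(iv) produces the single scalar estimate $\langle(f(A)\sharp_\alpha f(B))x,x\rangle \leq f\!\left(c\,\langle Yx,x\rangle\right)$, valid for every unit vector $x$.

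The main obstacle is converting this pointwise estimate into an eigenvalue estimate: the right-hand side is a function of the scalar $\langle Yx,x\rangle$ rather than of $\langle f(cY)x,x\rangle$, and pushing $f$ inside the inner product via concavity would run in the unwanted direction. I would circumvent this with Courant--Fischer, $\lambda_k(T)=\min_{\dim M=n-k+1}\ \max_{x\in M,\ \|x\|=1}\langle Tx,x\rangle$. Choosing $M_0$ to be the span of the eigenvectors of $Y$ belonging to its $n-k+1$ smallest eigenvalues $\lambda_k(Y)\geq\cdots\geq\lambda_n(Y)$, every unit $x\in M_0$ satisfies $\langle Yx,x\rangle\leq\lambda_k(Y)$; hence by the scalar estimate and monotonicity of $f$ one gets $\langle(f(A)\sharp_\alpha f(B))x,x\rangle\leq f\!\left(c\,\lambda_k(Y)\right)$ on $M_0$. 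Taking the maximum over $x\in M_0$ and using $M_0$ as a competitor in the min-max for $\lambda_k\!\left(f(A)\sharp_\alpha f(B)\right)$ yields $\lambda_k\!\left(f(A)\sharp_\alpha f(B)\right)\leq f\!\left(c\,\lambda_k(Y)\right)$.

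Finally, since $f$ is increasing and $cY>0$, the eigenvalues of $f(cY)$ are exactly the numbers $f\!\left(c\,\lambda_j(Y)\right)$ in the same decreasing order, so $f\!\left(c\,\lambda_k(Y)\right)=\lambda_k\!\left(f(cY)\right)$, which closes the right inequality and completes the chain. I expect the only delicate points to be the bookkeeping of the nonnegative powers in step (ii)--(iii) and the correct choice of the $(n-k+1)$-dimensional test subspace $M_0$; the remainder is a direct splicing of Lemma~\ref{lemma 1.3}, Jensen's inequality, and geometric concavity.
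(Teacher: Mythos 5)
Your proof is correct. The paper itself does not reprove this lemma (it is quoted from \cite{GK2017}), but your argument --- the min-max principle spliced with both sides of Lemma \ref{lemma 1.3}, Jensen's inequality for concave $f$, geometric concavity, and the scalar subhomogeneity $f(cx)\leq c\,f(x)$ for $c\geq 1$ handling the left inequality --- is precisely the mirror image of the paper's own proof of Proposition \ref{proposition3.2} for doubly convex functions, i.e.\ essentially the same approach as the source.
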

This statement is equivalent to the existence of a unitary operator $U$ satisfying
the following inequality:
\begin{align} \label{lemma1.4_ineq01}
f( A \sharp_\alpha B ) \geq K(w,\alpha)  U \left(f(A) \sharp_\alpha f(B)\right) U^*.
\end{align}
Also, from the proof of \cite[Theorem 1]{GK2017} it is inferred that the right hand side inequality holds for an increasing geometrically concave function too.
\begin{theorem} \label{theorem1.3}
 Let $f$ be an increasing doubly concave function on $[0, \infty)$,  $\frac{1}{p}+ \frac{1}{q}=1, p,q>1$, and $0 <s A^p \leq B^q \leq t A^p$ for some scalars $0 < s \leq t$ with $w = t/s$. Then, there is a unitary operator $U$ such that
\begin{equation}\label{theorem1.3_ineq01}
f\left(A^p\sharp_{1/q} B^q\right) \geq  K(w,1/q)  \;U \left( f(A^p)\sharp_{1/q} f(B^q) \right) U^*,
\end{equation}
where $K(w, \alpha)$ is the generalized Kantorovich constant defined as \eqref{lemma1.3_ineq01}. In addition, if $f$ is an operator monotone function and $s \leq 1 \leq t$, then for all $x \in \mathcal{H}$
\begin{equation}\label{theorem1.3_ineq02}
\langle f\left(A^p\sharp_{1/q} B^q\right) Ux, Ux\rangle  \geq K^2(w, 1/q) \langle f(A^p)x,x\rangle^{1/p} \langle f(B^q)x,x\rangle^{1/q}. 
\end{equation}
\end{theorem}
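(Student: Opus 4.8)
The plan is to treat the two displayed inequalities in turn: \eqref{theorem1.3_ineq01} comes essentially for free from the cited operator form of Lemma \ref{lemma1.4}, and then \eqref{theorem1.3_ineq02} is obtained by pairing that operator inequality with a scalar lower bound coming from Lemma \ref{lemma 1.3}.

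For \eqref{theorem1.3_ineq01} I would simply specialize the operator reformulation \eqref{lemma1.4_ineq01} of Lemma \ref{lemma1.4}. Putting $A:=A^p$, $B:=B^q$ and $\alpha:=1/q$ is legitimate because the hypothesis $0<sA^p\leq B^q\leq tA^p$ is exactly the sandwich condition required there, with the same $w=t/s$, and $f$ is an increasing doubly concave function on $[0,\infty)$. This produces a unitary $U$ with $f(A^p\sharp_{1/q}B^q)\geq K(w,1/q)\,U\bigl(f(A^p)\sharp_{1/q}f(B^q)\bigr)U^*$, which is \eqref{theorem1.3_ineq01}.

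For \eqref{theorem1.3_ineq02} I would take the inner product of \eqref{theorem1.3_ineq01} against the vector $Ux$. Since $U^*U=I$, the right-hand side collapses to $K(w,1/q)\langle\bigl(f(A^p)\sharp_{1/q}f(B^q)\bigr)x,x\rangle$, so it remains to bound this geometric-mean term below by $\langle f(A^p)x,x\rangle^{1/p}\langle f(B^q)x,x\rangle^{1/q}$. The right-hand inequality of Lemma \ref{lemma 1.3}, applied to the pair $f(A^p),f(B^q)$ with $\alpha=1/q$ (so that $1-1/q=1/p$), supplies a second factor $K(w,1/q)$, and chaining the two estimates yields the product $K^2(w,1/q)$, which is \eqref{theorem1.3_ineq02}. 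Strict positivity of $f(A^p),f(B^q)$, needed to invoke Lemma \ref{lemma 1.3}, holds because $A^p,B^q>0$ and $f>0$ on $(0,\infty)$ unless $f\equiv 0$, a trivial case.

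The main obstacle, and precisely the reason the extra hypotheses ``$f$ operator monotone'' and $s\leq 1\leq t$ are imposed, is verifying that $f(A^p)$ and $f(B^q)$ obey a sandwich with the \emph{same} window ratio $w$, namely $sf(A^p)\leq f(B^q)\leq tf(A^p)$; only then does Lemma \ref{lemma 1.3} return $K(w,1/q)$ rather than a constant for some other ratio. Here I would use that a non-negative operator monotone $f$ on $[0,\infty)$ is operator concave with $f(0)\geq 0$, as recalled in the introduction. Operator concavity with $f(0)\geq 0$ gives $f(cX)\geq c\,f(X)$ for $c\leq 1$ and $f(cX)\leq c\,f(X)$ for $c\geq 1$, for every $X>0$ (comparing $X$ with the endpoint $0$). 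Applying operator monotonicity to $sA^p\leq B^q$ and then the case $c=s\leq 1$ gives $sf(A^p)\leq f(sA^p)\leq f(B^q)$, while applying it to $B^q\leq tA^p$ and then the case $c=t\geq 1$ gives $f(B^q)\leq f(tA^p)\leq tf(A^p)$. This keeps the ratio equal to $w$, so Lemma \ref{lemma 1.3} delivers exactly the needed $K(w,1/q)$ and the two-step chain completes the proof.
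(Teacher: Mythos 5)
Your proposal is correct and follows essentially the same route as the paper: inequality \eqref{theorem1.3_ineq01} by specializing \eqref{lemma1.4_ineq01} with $A:=A^p$, $B:=B^q$, $\alpha:=1/q$, and inequality \eqref{theorem1.3_ineq02} by pairing it with the right-hand estimate of Lemma \ref{lemma 1.3}, after using operator monotonicity together with concavity and $s\leq 1\leq t$ to get $s f(A^p)\leq f(B^q)\leq t f(A^p)$ with the same ratio $w$. In fact you spell out the subhomogeneity step ($f(cX)\geq c f(X)$ for $c\leq 1$, $f(cX)\leq c f(X)$ for $c\geq 1$, from $f(0)\geq 0$) more explicitly than the paper does.
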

\begin{proof}
Putting $A:=A^p$, $B:=B^q$ and $\alpha :=1/q$ in Lemma \ref{lemma1.4}, we have \eqref{theorem1.3_ineq01}.
For the inequality \eqref{theorem1.3_ineq02}, we first note that since $f$ is an operator monotone function, $sA^p\leq B^q \leq tA^p$ implies $f(sA^p)\leq f(B^q) \leq f(tA^p)$. Since  $s \leq 1 \leq t$, by the cocavity of $f$ we have $s f(A^p)\leq f(B^q) \leq t f(A^p)$ and so the condition number of operators $f(A^p)$ and $f(B^q)$ is also $w$. Now we have
\begin{eqnarray*}
\langle U^* f\left(A^p\sharp_{1/q} B^q\right) Ux,x\rangle 
&\geq& K(w,1/q)  \langle \left( f(A^p)\sharp_{1/q} f(B^q) \right)x,x\rangle  \quad (\text{by \eqref{theorem1.3_ineq01}})\\
&\geq&  K^2(w,1/q)   \langle f(A^p)x,x\rangle^{1/p} \langle f(B^q)x,x\rangle^{1/q} \quad (\text{Lemma \ref{lemma 1.3}}).
\end{eqnarray*}
\end{proof}

\begin{corollary}\label{cor2.1}
Let $1/p+1/q=1$ with $p,q >1$. For commuting positive invertible operators $A$ and $B$ with spectra contained in $(1,\infty)$ such that $sA^p\leq B^q \leq tA^p$ for $0 < s < 1 < t$, there is a unitary operator $U$ that for any unit vector $x\in \mathcal{H}$,
\begin{equation}\label{cor2.1_ineq01}
\| U^* (AB)^{1/2}U x\|^2-1 \geq K^2(w,1/q)  \left(\|A^{p/2}x\|^2-1\right)^{1/p} \left(\|B^{q/2}x\|^2-1\right)^{1/q}.
\end{equation}
\end{corollary}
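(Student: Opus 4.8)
The plan is to specialize Theorem \ref{theorem1.3} to the affine function $f(t)=t-1$ and then rewrite the resulting operator inequality in terms of the norm-squared quantities appearing in \eqref{cor2.1_ineq01}. First I would check that $f(t)=t-1$ meets the hypotheses of Theorem \ref{theorem1.3} on the relevant interval: it is the case $p=1$ of the family $(t-1)^p$, $p\in[0,1]$, which the introduction lists as doubly concave on $[1,\infty)$; it is clearly increasing; and as an affine function with positive slope it is operator monotone. Since the spectra of $A$ and $B$ lie in $(1,\infty)$, the spectra of $A^p$, $B^q$ and $A^p\sharp_{1/q}B^q$ lie in $(1,\infty)$ as well, so $f$ is well defined and positive there. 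Moreover the hypothesis $0<s<1<t$ gives $s\leq 1\leq t$, which is precisely the extra condition required for the second inequality \eqref{theorem1.3_ineq02}. Hence Theorem \ref{theorem1.3} applies and yields a unitary $U$ with
\begin{equation*}
\langle f(A^p\sharp_{1/q}B^q)Ux,Ux\rangle \geq K^2(w,1/q)\,\langle f(A^p)x,x\rangle^{1/p}\langle f(B^q)x,x\rangle^{1/q}.
\end{equation*}

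Next I would translate each inner product into the desired form, using that $A$ and $B$ commute, that the operators involved are positive (so their square roots are self-adjoint), and that $x$ and $Ux$ are unit vectors. Commutativity gives $A^p\sharp_{1/q}B^q=(A^p)^{1/p}(B^q)^{1/q}=AB$, so $f(A^p\sharp_{1/q}B^q)=AB-I$ and
\begin{equation*}
\langle (AB-I)Ux,Ux\rangle=\langle AB\,Ux,Ux\rangle-1=\|(AB)^{1/2}Ux\|^2-1=\|U^*(AB)^{1/2}Ux\|^2-1,
\end{equation*}
the last equality holding because $U^*$ is an isometry. On the right, $\langle f(A^p)x,x\rangle=\langle A^px,x\rangle-1=\|A^{p/2}x\|^2-1$ and likewise $\langle f(B^q)x,x\rangle=\|B^{q/2}x\|^2-1$. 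Substituting these identities into the displayed inequality reproduces \eqref{cor2.1_ineq01} exactly.

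The computation is routine once Theorem \ref{theorem1.3} is in hand; the points requiring care are essentially bookkeeping. The main thing to get right is the verification that $f(t)=t-1$ legitimately falls under Theorem \ref{theorem1.3}: it is doubly concave only on $[1,\infty)$ rather than on all of $[0,\infty)$, so one must invoke the theorem with the spectral localization in $(1,\infty)$, and one must simultaneously use operator monotonicity (needed for \eqref{theorem1.3_ineq02}) together with the sign condition $s<1<t$ that guarantees $f(A^p)$ and $f(B^q)$ inherit the same condition number $w$. The remaining subtlety is tracking the unitary $U$ and the self-adjointness of $(AB)^{1/2}$ when passing between $\langle AB\,Ux,Ux\rangle$ and $\|U^*(AB)^{1/2}Ux\|^2$.
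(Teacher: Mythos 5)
Your proposal is correct and follows essentially the same route as the paper: specialize Theorem \ref{theorem1.3} (in particular \eqref{theorem1.3_ineq02}) to $f(t)=t-1$, use commutativity to identify $A^p\sharp_{1/q}B^q=AB$, and rewrite the inner products as $\|\cdot\|^2-1$ expressions. Your explicit verification that $f(t)=t-1$ is doubly concave only on $[1,\infty)$ (hence needing the spectral localization) and that $0<s<1<t$ supplies the condition $s\leq 1\leq t$ is slightly more careful than the paper's one-line justification, but it is the same argument; the only cosmetic difference is that you pass from $\|(AB)^{1/2}Ux\|$ to $\|U^*(AB)^{1/2}Ux\|$ via the isometry of $U^*$, whereas the paper uses $(U^*ABU)^{1/2}=U^*(AB)^{1/2}U$.
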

\begin{proof}
Taking $f(t)= t-1$ on $(1,\infty)$ in the inequality \eqref{theorem1.3_ineq02}, we get the desiered result. Note that this function is both  operator monotone and doubly concave function on $(1,\infty)$. So, we have
\begin{eqnarray*}
\langle U^* (A^p\sharp_{1/q} B^q - 1) Ux, x\rangle &=& \langle U^* (AB - 1) Ux,x\rangle = \langle U^* AB U x, x\rangle- \langle  x,x\rangle \\
&=& \| (U^* AB U)^{1/2} x \|^2- 1= \| U^* (AB)^{1/2} U x \|^2- 1. 
\end{eqnarray*}
The right hand side of the inequality is obtained in a similar way.
\end{proof}

\section{Reverse inequalities with generalized Kantorovich constant for doubly convex functions }

Theorem \ref{thmx1.2} provided a reverse of an operator Acz\'{e}l inequality with Kantorovich constant $K(t)$. Also, it has been proved for a non-negative operator decreasing function $g$.  
In this section we are going to present  some another reverse of an operator Acz\'{e}l inequality via generalized Kantorovich constant $K(w, \alpha)$. For this aim we need doubly convex functions.
\begin{definition}A  non-negative continuous function $g(t)$  defined on a positive interval $J \subset [0,\infty)$, is said doubly convex if:
\begin{enumerate}
 \item $g(t)$ is convex in the usual sense;
 \item $g(t)$ is geometrically convex, i.e., $ g(x^\alpha y^{1-\alpha}) \leq g(x)^\alpha g(y)^{1-\alpha}$  for all $x, y \in J$, and $\alpha \in [0, 1]$.
\end{enumerate}
\end{definition}
\par 
\noindent
Given real numbers $c_i \geq 0$ and $\alpha_i \in (-\infty, 0] \cup [1,\infty), i = 1, \ldots , n$, the
function $g(t) :=\Sigma_{i=1}^n c_i t^{\alpha_i }$ is doubly convex on $(0,\infty)$. See \cite{BH2014}.
 
\begin{lemma} {\bf (\cite[p. 58]{B1997}
(The Minimax Principle)} 
Let $A$ be a Hermitian operator on $\mathcal{H}$. Then
\begin{align*}
\lambda_k (A) 
&= \min_{ \dim \mathcal{F} = n-k+1 }  \max \big\lbrace  \langle Ah, h\rangle ; \;  h \in \mathcal{F},  \; \| h\|=1 \big\rbrace,\nonumber
\end{align*}
where $ \mathcal{F} $ is a subspace of $\mathcal{H}$.
\end{lemma}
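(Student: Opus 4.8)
The plan is to invoke the spectral theorem to reduce everything to the eigenvalues and eigenvectors of $A$, and then to establish the minimax identity by proving the two opposing inequalities separately. Throughout, write $\dim \mathcal{H} = n$, let $\lambda_1 \geq \lambda_2 \geq \cdots \geq \lambda_n$ denote the eigenvalues of the Hermitian operator $A$ listed in decreasing order with multiplicity, and fix an orthonormal basis $u_1, \ldots, u_n$ of eigenvectors with $Au_j = \lambda_j u_j$. For any unit vector $h = \sum_{j} c_j u_j$ one has $\langle Ah, h\rangle = \sum_{j} \lambda_j |c_j|^2$ with $\sum_{j} |c_j|^2 = 1$, so the Rayleigh quotient is a convex combination of the eigenvalues. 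This elementary identity is the computational core of both bounds.

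First I would show that the right-hand side is at most $\lambda_k$ by exhibiting a single subspace of the prescribed dimension on which the maximum of the Rayleigh quotient does not exceed $\lambda_k$. Take $\mathcal{F}_0 := \mathrm{span}\{u_k, u_{k+1}, \ldots, u_n\}$, which has dimension $n - k + 1$. For a unit vector $h \in \mathcal{F}_0$ only the coefficients $c_k, \ldots, c_n$ are nonzero, whence $\langle Ah, h\rangle = \sum_{j=k}^{n} \lambda_j |c_j|^2 \leq \lambda_k \sum_{j=k}^{n} |c_j|^2 = \lambda_k$, using $\lambda_j \leq \lambda_k$ for $j \geq k$. Thus the maximum over $\mathcal{F}_0$ is $\leq \lambda_k$ (in fact it equals $\lambda_k$, attained at $h = u_k$), so the minimum over all $(n-k+1)$-dimensional subspaces is $\leq \lambda_k$.

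Next I would prove the reverse inequality: for \emph{every} subspace $\mathcal{F}$ with $\dim \mathcal{F} = n - k + 1$, the maximum of the Rayleigh quotient over the unit sphere of $\mathcal{F}$ is at least $\lambda_k$. Set $\mathcal{G} := \mathrm{span}\{u_1, \ldots, u_k\}$, of dimension $k$. Since $\dim \mathcal{F} + \dim \mathcal{G} = (n-k+1) + k = n+1 > n = \dim \mathcal{H}$, the intersection $\mathcal{F} \cap \mathcal{G}$ must be nonzero; pick a unit vector $h$ in it. As $h \in \mathcal{G}$, only $c_1, \ldots, c_k$ are nonzero, so $\langle Ah, h\rangle = \sum_{j=1}^{k} \lambda_j |c_j|^2 \geq \lambda_k \sum_{j=1}^{k} |c_j|^2 = \lambda_k$, using $\lambda_j \geq \lambda_k$ for $j \leq k$. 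Hence the maximum over $\mathcal{F}$ is $\geq \lambda_k$, and taking the minimum over all admissible $\mathcal{F}$ preserves this lower bound. Combining the two inequalities yields the claimed equality.

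The step I expect to be the only nontrivial idea — and hence the main obstacle — is the dimension count in the reverse direction: it guarantees that an arbitrary $(n-k+1)$-dimensional subspace must meet the top-$k$ eigenspace $\mathcal{G}$ in a nonzero vector, which is precisely what forces every competing subspace to contain a direction with Rayleigh quotient at least $\lambda_k$. Everything else is bookkeeping on convex combinations of eigenvalues. (Implicit here is the standing assumption that $\mathcal{H}$ is $n$-dimensional, as the notation $\lambda_k$, $k=1,\ldots,n$, indicates; in an infinite-dimensional setting the intersection argument would be replaced by the fact that a subspace of codimension $k-1$ cannot avoid $\mathrm{span}\{u_1,\ldots,u_k\}$, and $\lambda_k$ is read as the $k$-th eigenvalue counted from the top of the spectrum.)
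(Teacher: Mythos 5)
Your proof is correct, but note that the paper itself offers no proof of this lemma: it is quoted verbatim from Bhatia's \emph{Matrix Analysis} (the cited reference) as a known result. Your argument --- spectral decomposition, the subspace $\mathrm{span}\{u_k,\ldots,u_n\}$ for the upper bound, and the dimension-count intersection with $\mathrm{span}\{u_1,\ldots,u_k\}$ for the lower bound --- is precisely the standard Courant--Fischer proof found in that reference, so it matches the intended justification exactly.
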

The following result gives an analogous of  Lemma \ref{lemma1.1} with the constant $K(w,\alpha)$.
\begin{proposition}\label{proposition3.2}
 Let $ g $ be an increasing doubly convex function on $(0, \infty)$ and $A, B$ be  positive definite matrices such that $0 < s g(A) \leq  g(B)  \leq t g(A)$ for some scalars $0 < s \leq t$. Then, for all $\alpha \in [0 , 1]$ and $k=1,2, \ldots,n$
\begin{align}\label{proposition3.2_ineq01}
 \lambda_k \left(g( A \sharp_\alpha B)\right) \leq  K^{-1}(w, \alpha) \lambda_k \left(g(A)\sharp_\alpha g(B) \right),
\end{align}
where $K(w,\alpha)$ is the generalized Kantorovich constant defined as  \eqref{lemma1.3_ineq01}.
\end{proposition}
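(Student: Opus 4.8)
The plan is to reduce the eigenvalue inequality to scalar quadratic-form estimates via the Minimax Principle, and then to feed in the two halves of Lemma~\ref{lemma 1.3}: the (unconditional) left half applied to the pair $(A,B)$, and the (conditional) right half applied to the pair $(g(A),g(B))$, where the sandwich hypothesis lives. Since $g$ is increasing and continuous it preserves the order of eigenvalues, so $\lambda_k\!\left(g(A\sharp_\alpha B)\right)=g\!\left(\lambda_k(A\sharp_\alpha B)\right)$, and $g$ may moreover be pulled through both the outer minimum and the inner maximum of the Minimax Principle. I would therefore begin from
\begin{align*}
\lambda_k\!\left(g(A\sharp_\alpha B)\right) = g\!\left( \min_{\dim\mathcal{F}=n-k+1}\ \max_{h\in\mathcal{F},\,\|h\|=1}\ \langle (A\sharp_\alpha B)h,h\rangle \right).
\end{align*}

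The core of the argument is the estimate of the inner quadratic form. The left-hand inequality of Lemma~\ref{lemma 1.3}, $\langle (A\sharp_\alpha B)h,h\rangle\le\langle Ah,h\rangle^{1-\alpha}\langle Bh,h\rangle^{\alpha}$, holds for all positive $A,B$ and all unit $h$, since it is nothing but the standard concavity (transformer) property of operator means and does not invoke the sandwich condition. Applying the increasing function $g$, then the geometric convexity of $g$ (with $x=\langle Ah,h\rangle$ and $y=\langle Bh,h\rangle$, whose weights match those of $\sharp_\alpha$), and finally ordinary Jensen convexity of $g$ for the unit vector $h$, I get
\begin{align*}
g\!\left(\langle (A\sharp_\alpha B)h,h\rangle\right) &\le g\!\left(\langle Ah,h\rangle^{1-\alpha}\langle Bh,h\rangle^{\alpha}\right)\\
&\le g(\langle Ah,h\rangle)^{1-\alpha}\,g(\langle Bh,h\rangle)^{\alpha} \le \langle g(A)h,h\rangle^{1-\alpha}\langle g(B)h,h\rangle^{\alpha}.
\end{align*}
Here geometric convexity produces exactly the $\sharp_\alpha$-type exponents, while Jensen converts $g$ of a quadratic form into the quadratic form of $g$.

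To finish, the hypothesis $0<s\,g(A)\le g(B)\le t\,g(A)$ with $w=t/s$ lets me apply the right-hand inequality of Lemma~\ref{lemma 1.3} to the pair $(g(A),g(B))$, giving $\langle g(A)h,h\rangle^{1-\alpha}\langle g(B)h,h\rangle^{\alpha}\le K^{-1}(w,\alpha)\,\langle (g(A)\sharp_\alpha g(B))h,h\rangle$; chaining this with the previous display, taking $\max$ over $h\in\mathcal{F}$ and $\min$ over $\mathcal{F}$, and reading the resulting right-hand side back through the Minimax Principle as $K^{-1}(w,\alpha)\,\lambda_k(g(A)\sharp_\alpha g(B))$ yields \eqref{proposition3.2_ineq01}. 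I expect the main obstacle to be bookkeeping rather than any deep step: one must keep the three distinct properties of $g$ --- monotonicity (spectral step and commuting with $\min$/$\max$), geometric convexity (mean step), and ordinary convexity (Jensen step) --- applied in the correct order, and justify that the left half of Lemma~\ref{lemma 1.3} is available for $(A,B)$ unconditionally even though the sandwich hypothesis is only assumed for $(g(A),g(B))$.
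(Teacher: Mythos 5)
Your proposal is correct and follows essentially the same route as the paper's own proof: the Minimax Principle combined with both halves of Lemma~\ref{lemma 1.3} (the unconditional left half for $(A,B)$, the sandwiched right half for $(g(A),g(B))$), with monotonicity, geometric convexity, and ordinary Jensen convexity of $g$ playing exactly the roles the paper assigns them. The only difference is cosmetic --- you start from $\lambda_k\left(g(A\sharp_\alpha B)\right)=g\left(\lambda_k(A\sharp_\alpha B)\right)$ and pull $g$ through the minimax, while the paper fixes the near-optimal subspace for $\lambda_k\left(g(A)\sharp_\alpha g(B)\right)$ and chains the same scalar estimates in the reverse direction; your bookkeeping is, if anything, slightly more careful at the final step.
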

\begin{proof}
We will use the
following observation which follows from the standard Jensen's inequality: for any
vector $x$ whose norm is less than or equal to one, since $g$ is convex 
$
\langle g(A)x, x \rangle \geq g (\langle Ax, x \rangle)$. 
For any integer $k$ less than or equal to the dimension of the space, we have a subspace $\mathcal{F}$ of dimension $n-k+1$ such that
\begin{align} \label{}
&\lambda_k  (g(A) \sharp_\alpha g(B)) \nonumber\\
&= \max_{x \in \mathcal{F} : \| x\|=1} \langle  g(A) \sharp_\alpha g(B) x , x \rangle \hspace*{1cm}\text{ (minmax principle)}\nonumber\\
& \geq  \max_{x \in \mathcal{F} : \| x\|=1}  K(w,\alpha) \langle g(A) x , x \rangle^{1-\alpha}  \langle  g(B) x , x \rangle^{\alpha} \hspace*{1cm}\text{ (Lemma \ref{lemma 1.3})}\nonumber\\
&=\max_{h \in \mathcal{F} : \| x\|=1}  K(w,\alpha) ( g \langle A x , x \rangle)^{1-\alpha}  (g \langle B x , x \rangle)^{\alpha}\hspace*{1cm}\text{ (convexity of $g$)}\nonumber\\
&\geq \max_{x \in \mathcal{F} : \| x\|=1} K(w,\alpha) g\big( \langle A x , x \rangle^{1-\alpha}  \langle B x , x \rangle^{\alpha} \big) \hspace*{1cm}\text{ (geometrically convexity of $g$)}\nonumber\\
& \geq \max_{x \in \mathcal{F} : \| x\|=1} K(w,\alpha) g \big( \langle A \sharp_\alpha B x , x \rangle \big)  \hspace*{1cm}\text{ (Lemma \ref{lemma 1.3})}\nonumber\\
& = K(w,\alpha) \max_{x \in \mathcal{F} : \| x\|=1}  \langle g(A \sharp_\alpha B) x , x \rangle \hspace*{1cm}\text{ (monotonicity of $g$)}\nonumber\\ 
&\geq K(w,\alpha) \lambda_k (g( A \sharp_\alpha B )), \hspace*{1cm}\text{ (minmax principle)}.\nonumber
\end{align}
\end{proof}

\begin{remark}
We know that the above statement is equivalent to the existence of a unitary operator $U$ satisfying in
the following inequality:
\begin{align} \label{proposition3.2_ineq03}
  g( A \sharp_\alpha B )  \leq K^{-1} (w,\alpha) U (  g(A) \sharp_\alpha g(B) ) U^*.
\end{align}
This result provides a reverse of the inequality \eqref{lemma1.4_ineq01} for doubly convex functions.
\end{remark}

Applying Proposition \ref{proposition3.2} we achieve the following  reverse operator Acz\'{e}l inequality.

\begin{theorem}\label{theorem3.2}
Let $ g $ be an increasing doubly convex function on $(0, \infty)$, $\frac{1}{p}+ \frac{1}{q}=1, p,q>1$ and  $s g(A^p) \leq  g(B^q)  \leq t g(A^p)$ for some scalars $0 < s \leq t$. Then, there is a unitary operator $U$ such that for all  $x \in \mathcal{H}$
\begin{align} 
&g (A^p \sharp_\frac{1}{q} B^q)  \leq K^{-1}(w,1/q)  U \big(g(A^p)\sharp_\frac{1}{q} g(B^q) \big) U^* , \label{theorem3.2_inequality01}\\ 
& \langle g(A^p \sharp_\frac{1}{q}B^q) Ux , Ux \rangle \leq  K^{-1}(w,1/q) \langle g(A^p) x , x \rangle^{\frac{1}{p}} \langle  g(B^q)  x , x \rangle ^{\frac{1}{q}}. \label{theorem3.2_inequality02}
\end{align}
\end{theorem}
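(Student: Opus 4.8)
The plan is to read off \eqref{theorem3.2_inequality01} as a direct specialization of Proposition \ref{proposition3.2}, and then to deduce \eqref{theorem3.2_inequality02} from it by pairing against the vector $Ux$ and invoking the scalar AM--GM bound recorded in Lemma \ref{lemma 1.3}.

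For \eqref{theorem3.2_inequality01} I would substitute $A := A^p$, $B := B^q$ and $\alpha := 1/q$ into Proposition \ref{proposition3.2}. The standing hypothesis $s\,g(A^p) \le g(B^q) \le t\,g(A^p)$ is exactly the condition $0 < s\,g(A) \le g(B) \le t\,g(A)$ required there, with associated condition number $w = t/s$, so no preprocessing of the spectral bounds is needed; this is notably cleaner than Theorem \ref{theorem1.3}, where the bounds on $f(A^p), f(B^q)$ first had to be derived from those on $A^p, B^q$ via operator monotonicity and concavity. The eigenvalue statement \eqref{proposition3.2_ineq01} is, as recorded in the Remark following Proposition \ref{proposition3.2}, equivalent to the unitary form \eqref{proposition3.2_ineq03}; applying that form with the above substitutions produces \eqref{theorem3.2_inequality01} verbatim.

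For \eqref{theorem3.2_inequality02} I would begin from \eqref{theorem3.2_inequality01} and evaluate the associated quadratic form at $Ux$. Since $U^*U = I$, the two copies of $U$ cancel and one obtains
\[
\langle g(A^p \sharp_{1/q} B^q) Ux, Ux\rangle \le K^{-1}(w, 1/q)\, \langle \big(g(A^p)\sharp_{1/q} g(B^q)\big) x, x\rangle .
\]
It then remains only to bound the geometric-mean quadratic form on the right. The left-hand inequality of Lemma \ref{lemma 1.3}, applied with $A := g(A^p)$, $B := g(B^q)$ and $\alpha := 1/q$, together with $1 - 1/q = 1/p$, gives
\[
\langle \big(g(A^p)\sharp_{1/q} g(B^q)\big) x, x\rangle \le \langle g(A^p) x, x\rangle^{1/p}\, \langle g(B^q) x, x\rangle^{1/q},
\]
and chaining the two displays yields \eqref{theorem3.2_inequality02}.

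Since both conclusions are immediate consequences of results proved earlier in the section, there is no genuine analytic obstacle here; the work is entirely bookkeeping. The two points that require care are: keeping the unitary conjugation straight, so that the quadratic form in \eqref{theorem3.2_inequality01} is tested at $Ux$ rather than at $x$ (this is what makes the $U$'s cancel); and selecting the correct half of Lemma \ref{lemma 1.3}, namely the lower bound $\langle A\sharp_\alpha B\, x, x\rangle \le \langle Ax,x\rangle^{1-\alpha}\langle Bx,x\rangle^{\alpha}$, since the companion upper bound involving $K^{-1}(w,\alpha)$ would send the estimate in the wrong direction. The single unitary $U$ furnished for \eqref{theorem3.2_inequality01} also serves for \eqref{theorem3.2_inequality02}, so no separate unitary need be constructed.
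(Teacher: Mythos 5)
Your proposal is correct and follows essentially the same route as the paper: inequality \eqref{theorem3.2_inequality01} is obtained by substituting $A:=A^p$, $B:=B^q$, $\alpha:=1/q$ into the unitary form \eqref{proposition3.2_ineq03} of Proposition \ref{proposition3.2}, and \eqref{theorem3.2_inequality02} by testing the result at $Ux$ (equivalently, the paper writes $\langle U^* g(A^p\sharp_{1/q}B^q)Ux,x\rangle$) and then invoking the left-hand inequality of Lemma \ref{lemma 1.3} for $g(A^p)$ and $g(B^q)$, whose sandwich condition is exactly the theorem's hypothesis. Your two points of care --- cancelling the unitaries and choosing the correct half of Lemma \ref{lemma 1.3} --- are precisely the steps the paper's proof performs.
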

\begin{proof}
Letting $\alpha := \frac{1}{q}$ and replacing $A^p$ and $B^q$ with $A$ and $B$ in the inequality \eqref{proposition3.2_ineq03}, we reach the first inequlity. For the second, we have 
\begin{eqnarray*} 
\langle U^* g(A^p \sharp_\frac{1}{q}B^q) U x , x \rangle
&\leq& K^{-1}(w,1/q) \big\langle \big(g(A^p)\sharp_\frac{1}{q} g(B^q) \big) x , x \big\rangle \hspace*{1cm} (\text{by} \,\, \eqref{theorem3.2_inequality01}) \\
&\leq&  K^{-1}(w,1/q) \langle g(A^p) x , x \rangle^{\frac{1}{p}} \langle g(B^q)  x , x \rangle ^{\frac{1}{q}},\hspace*{1cm}\text{(Lemma \ref{lemma 1.3})}.
\end{eqnarray*}
\end{proof}
\begin{remark}
Theorem \ref{theorem3.2} is a conjugate of Theorem \ref{theorem1.3}, which gives a revese operator Acz\'{e}l inequality relevant to the generalized Kantorovich constant. 
\end{remark}
In the following, we will present another reverse of Acz\'{e}l inequality involving decreasing geometrically convex functions.
Note that $x^p$ for $p<0$ on $(0,\infty)$ and $\csc(x)$ on 
$(0,\frac{\pi}{2})$ are examples of decreasing geometrically convex functions.   In what follows, the capital letters $A, B$ means $n \times n$ matrices or bounded linear operators on an $n$-dimentional complex Hilbert space $\mathcal{H}$.
\begin{proposition} \label{proposition3.1}
 Let $g$ be a decreasing geometrically convex function on $(0, \infty)$ and $0 <s A \leq B \leq t A$ for some scalars $0 < s \leq t$ with $w = t/s$. Then for all $\alpha \in [0 , 1]$ and $ k=1,2,\cdots,n$,
\begin{align}\label{proposition3.1_ineq01} 
\lambda_{k} \left(g\left( K^{-1}(w,\alpha) (A \sharp_\alpha B) \right)\right) \leq \lambda_{k}  \left(g(A) \sharp_\alpha g(B)\right). 
\end{align}
\end{proposition}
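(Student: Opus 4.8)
The plan is to deduce Proposition~\ref{proposition3.1} from the right-hand side inequality of Lemma~\ref{lemma1.4} by the order-reversing substitution $f:=1/g$, rather than running a fresh minimax argument. The hypotheses match exactly: the sandwich condition $0<sA\le B\le tA$ and the ratio $w=t/s$ are precisely those of Lemma~\ref{lemma1.4}, and — as observed immediately after that lemma — its right-hand side inequality
$$\lambda_k\!\left(f\big(K^{-1}(w,\alpha)(A\sharp_\alpha B)\big)\right)\ge \lambda_k\!\left(f(A)\sharp_\alpha f(B)\right)$$
needs only that $f$ be increasing and geometrically concave, which is exactly the class dual to the one in our statement. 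Since all spectra lie in $(0,\infty)$, the values of $f$ outside that interval are irrelevant.

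First I would check that $f:=1/g$ lands in that class. As $g$ is strictly positive and decreasing on $(0,\infty)$, the function $f=1/g$ is well defined and increasing; and writing geometric convexity of $g$ as $g(x^\alpha y^{1-\alpha})\le g(x)^\alpha g(y)^{1-\alpha}$ and taking reciprocals gives $f(x^\alpha y^{1-\alpha})\ge f(x)^\alpha f(y)^{1-\alpha}$, i.e. $f$ is geometrically concave. Observe that no ordinary convexity of $g$ enters, in agreement with the stated hypotheses. Applying the extension of Lemma~\ref{lemma1.4} to this $f$ then yields the displayed inequality above for every $k$.

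Next I would translate this back to $g$. Because $f(M)=g(M)^{-1}$ for every positive definite $M$, and the $\alpha$-geometric mean obeys the inversion rule $g(A)^{-1}\sharp_\alpha g(B)^{-1}=\big(g(A)\sharp_\alpha g(B)\big)^{-1}$, the inequality for $f$ reads
$$\lambda_k\!\left(g\big(K^{-1}(w,\alpha)(A\sharp_\alpha B)\big)^{-1}\right)\ge \lambda_k\!\left(\big(g(A)\sharp_\alpha g(B)\big)^{-1}\right).$$
Finally I would invoke the eigenvalue-inversion identity $\lambda_k(Z^{-1})=1/\lambda_{n-k+1}(Z)$, valid for positive definite $Z$, which turns the last display into
$$\frac{1}{\lambda_{n-k+1}\!\left(g\big(K^{-1}(w,\alpha)(A\sharp_\alpha B)\big)\right)}\ge \frac{1}{\lambda_{n-k+1}\!\left(g(A)\sharp_\alpha g(B)\right)};$$
taking reciprocals and relabelling $n-k+1\mapsto k$ (a bijection of $\{1,\dots,n\}$) yields exactly \eqref{proposition3.1_ineq01}.

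The two routine verifications — that $1/g$ is geometrically concave and that inversion commutes with $\sharp_\alpha$ — are standard. The step I would single out as the main obstacle is the bookkeeping of the eigenvalue indices, since \emph{two} order reversals occur: passing to $f=1/g$ flips monotonicity, and $\lambda_k(Z^{-1})=1/\lambda_{n-k+1}(Z)$ flips the index. One must confirm that these reversals compose correctly so that the conclusion holds for every $k$, which they do precisely because the final relabelling $n-k+1\mapsto k$ is a bijection. A direct minimax proof paralleling that of Proposition~\ref{proposition3.2} is also available, but there the decreasing character of $g$ forces one to trade each ``$\max$'' for a ``$\min$'' and to carry the same index reversal by hand, so the duality argument above is the more transparent route.
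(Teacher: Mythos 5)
Your proposal is correct and follows essentially the same route as the paper's own proof: pass to $f=1/g$ (which is increasing and geometrically concave), invoke the right-hand side inequality of Lemma~\ref{lemma1.4} in the extended form noted after that lemma, translate back via $g(A)^{-1}\sharp_\alpha g(B)^{-1}=\bigl(g(A)\sharp_\alpha g(B)\bigr)^{-1}$ and $\lambda_k(Z^{-1})=\lambda_{n-k+1}^{-1}(Z)$, then relabel the indices. Your added remark about the two order reversals composing correctly is a fair account of the same index bookkeeping the paper performs.
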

\begin{proof}
Since $g$ is an decreasing geometrically convex function $(0, \infty)$, so $f= 1/g$ is an increasing geometrically concave function on  $(0, \infty)$ as follows:
\begin{align*} \label{}
f(x)^{\alpha} f(y)^{1-\alpha}=\dfrac{1}{g(x)^{\alpha} g(y)^{1-\alpha}} \leq   \dfrac{1}{g(x^{\alpha} y^{1-\alpha})} = f(x^{\alpha} y^{1-\alpha}).
\end{align*}
Furthermore, according to Lemma \ref{lemma1.4} for every increasing geometrically concave function $f$ 
\begin{align*} 
\lambda_k  \left(f(A) \sharp_\alpha f(B)\right)  \leq   \lambda_k \left( f\left(   K^{-1}(w,\alpha)(A \sharp_\alpha B) \right)\right).
\end{align*}
Now, by applying this inequality for the function $f= 1/g$ we have
\begin{align}
\lambda_k  \left(g(A)^{-1} \sharp_\alpha g(B)^{-1}\right)  \leq   \lambda_k \left(g\left(K^{-1}(w,\alpha) (A \sharp_\alpha B) \right)^{-1}\right),
\end{align}
where $ k=1,2,\cdots,n$. Thanks to the property  $A^{-1} \sharp_\alpha B^{-1}=(A \sharp_\alpha B)^{-1} $ we can write
\begin{align*} \label{}
\lambda_k  \left((g(A) \sharp_\alpha g(B))^{-1}\right)  \leq \lambda_k \left(g\left(K^{-1}(w,\alpha) (A \sharp_\alpha B) \right)^{-1}\right).
\end{align*}
On the other hand, for every operator $A>0$,  $\lambda_k (A^{-1})=\lambda_{n-k+1}^{-1}(A)$. Hence
\begin{align*} \label{}
\lambda_{n-k+1}^{-1}  \left(g(A) \sharp_\alpha g(B)\right)  \leq  \lambda_{n-k+1}^{-1} \left(g\left(K^{-1}(w,\alpha) (A \sharp_\alpha B)\right) \right).
\end{align*}
This inequality is equevalent to the following one 
\begin{align*} \label{}
\lambda_{j}  \left(g(A) \sharp_\alpha g(B)\right)  \geq  \lambda_{j} \left(g\left(K^{-1}(w,\alpha) (A \sharp_\alpha B) \right)\right),
\end{align*}
for $ j=1,2,\cdots,n$ as desired.
\end{proof}

\begin{theorem}\label{theorem3.1}
Let $ g $ be a decreasing doubly convex function on $(0, \infty)$, $\frac{1}{p}+ \frac{1}{q}=1, p,q>1$ and  $0 < s A^p \leq  B^q \leq t A^p$ for some constants $0 < s \leq t$. Then, there is a unitary operator $U$ such that for all  $x \in \mathcal{H}$
\begin{align} 
&g \left(K(w, \alpha) (A^p \sharp_\frac{1}{q} B^q) \right)  \leq  U \big(g(A^p)\sharp_\frac{1}{q} g(B^q) \big) U^* ,\label{theorem3.1_ineq01}\\ 
& \left\langle g\left(K(w, \alpha)(A^p \sharp_\frac{1}{q}B^q)\right) Ux , Ux \right\rangle \leq \langle  g(A^p) x , x \rangle^{\frac{1}{p}} \langle  g(B^q)  x , x \rangle ^{\frac{1}{q}}. \label{theorem3.1_ineq02}
\end{align}
\end{theorem}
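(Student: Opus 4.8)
The plan is to read off Theorem \ref{theorem3.1} as the specialization of Proposition \ref{proposition3.1} to the powers $A^p, B^q$ with mean parameter $\alpha = 1/q$, exactly paralleling the derivation of Theorem \ref{theorem3.2} from Proposition \ref{proposition3.2}. The hypotheses line up: a decreasing doubly convex $g$ is in particular decreasing and geometrically convex on $(0,\infty)$, and the sandwich condition $0 < sA^p \leq B^q \leq tA^p$ is precisely the hypothesis of Proposition \ref{proposition3.1} once we set $A := A^p$ and $B := B^q$. First I would substitute $A := A^p$, $B := B^q$ and $\alpha := 1/q$ (so that $1-\alpha = 1/p$) into the eigenvalue inequality \eqref{proposition3.1_ineq01}. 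This produces, for every $k = 1,\dots,n$, an inequality comparing $\lambda_k$ of $g$ applied to the rescaled geometric mean $A^p \sharp_{1/q} B^q$ with $\lambda_k\big(g(A^p)\sharp_{1/q} g(B^q)\big)$, the generalized Kantorovich constant $K(w,1/q)$ entering inside $g$.

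Second, I would upgrade this family of scalar (eigenvalue) inequalities to the operator inequality \eqref{theorem3.1_ineq01}. This rests on the standard Weyl-type equivalence, a consequence of the minimax principle recalled just before Proposition \ref{proposition3.2} and already used in the remark following that proposition: for Hermitian operators $P, Q$ on the finite-dimensional space $\mathcal{H}$, one has $\lambda_k(P) \leq \lambda_k(Q)$ for all $k$ if and only if there is a unitary $U$ with $P \leq U Q U^*$. Applying this with $P = g\big(K(w,1/q)(A^p \sharp_{1/q} B^q)\big)$ and $Q = g(A^p)\sharp_{1/q} g(B^q)$ produces the unitary $U$ and the operator inequality \eqref{theorem3.1_ineq01}.

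Third, for the vector inequality \eqref{theorem3.1_ineq02} I would rewrite \eqref{theorem3.1_ineq01} as $U^* g\big(K(w,1/q)(A^p \sharp_{1/q} B^q)\big) U \leq g(A^p)\sharp_{1/q} g(B^q)$, take the inner product of both sides with a vector $x$, and bound the right-hand side by the left inequality of Lemma \ref{lemma 1.3}, namely $\langle P \sharp_\alpha Q x, x\rangle \leq \langle Px,x\rangle^{1-\alpha}\langle Qx,x\rangle^{\alpha}$, applied to $P = g(A^p)$, $Q = g(B^q)$ and $\alpha = 1/q$. Since $\langle U^* g(\cdot) U x, x\rangle = \langle g(\cdot) Ux, Ux\rangle$, this yields \eqref{theorem3.1_ineq02} at once. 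Note that only the left half of Lemma \ref{lemma 1.3} is used, so the condition number of the pair $g(A^p), g(B^q)$ never has to be computed and no extra hypothesis on them is required.

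The step demanding the most care — and the point I expect to be the \emph{main obstacle} — is the exact placement of the generalized Kantorovich factor inside $g$ when specializing Proposition \ref{proposition3.1}. That proposition is itself obtained from Lemma \ref{lemma1.4} by passing to $f = 1/g$ and inverting twice, through $A^{-1}\sharp_\alpha B^{-1} = (A\sharp_\alpha B)^{-1}$ and through $\lambda_k(Y^{-1}) = \lambda_{n-k+1}^{-1}(Y)$; each inversion reverses the inequality and interacts with the rescaling $t \mapsto g(ct)$, which is genuinely monotone because $g$ is decreasing. I would therefore track the two order reversals with care, using that a positive scalar inside $g$ acts monotonically on the commuting family generated by $A^p \sharp_{1/q} B^q$, so that the factor $K(w,1/q) \in (0,1]$ ends up in precisely the position recorded in \eqref{theorem3.1_ineq01}. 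Pinning down this placement is what binds the specialization to the stated form, and it is where the argument must be handled most carefully.
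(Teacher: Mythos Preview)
Your proposal is correct and follows exactly the approach the paper indicates: specialize Proposition~\ref{proposition3.1} with $A\mapsto A^p$, $B\mapsto B^q$, $\alpha=1/q$, pass from the eigenvalue comparison to an operator inequality via the unitary conjugation principle, and then invoke the left inequality of Lemma~\ref{lemma 1.3} for the vector form. Your caution about the placement of the Kantorovich factor is well taken, since carrying out the substitution from Proposition~\ref{proposition3.1} literally yields $K^{-1}(w,1/q)$ inside $g$ rather than $K(w,1/q)$ as printed in \eqref{theorem3.1_ineq01}; this appears to be a typographical slip in the statement rather than a flaw in your argument.
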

\begin{proof}
The proof is similar to that of Theorem \ref{theorem3.2} by applying Proposition \ref{proposition3.1}.
\end{proof}

\section*{Acknowledgement}
The author (S.F.) was partially supported by JSPS KAKENHI Grant Number 16K05257.

\end{document}